\documentclass{amsart}

\usepackage{amsmath,amssymb,amsthm}
\usepackage{graphicx,enumitem,microtype,needspace}
\usepackage{hyperref}
\hypersetup{
  hidelinks,
  pdftitle={Knot Floer homology of fibered positive links},
  pdfauthor={Zhechi Cheng}
}

\newtheorem{theorem}{Theorem}[section]
\newtheorem{proposition}[theorem]{Proposition}
\newtheorem{conjecture}[theorem]{Conjecture}
\newtheorem{corollary}[theorem]{Corollary}
\newtheorem{remark}[theorem]{Remark}
\newtheorem{lemma}[theorem]{Lemma}
\numberwithin{equation}{section}

\newcommand{\mc}{\mathcal}
\newcommand{\mb}{\mathbb}
\newcommand{\wh}{\widehat}
\newcommand{\F}{\mb F}
\newcommand{\HFK}{\wh{HFK}}

\begin{document}
\title{Knot Floer homology of fibered positive links}
\author{Zhechi Cheng}
\address{School of Mathematics and Statistics, Wuhan University, Wuhan, Hubei, China}
\email{\href{mailto:zcheng@whu.edu.cn}{zcheng@whu.edu.cn}}
\begin{abstract}
We determine the Maslov-graded next-to-top term of knot Floer homology for
fibered positive links. Its rank in Maslov grading $(-1)$ is the number
of nontrivial prime connected-sum factors plus the number of components
minus one, and the remaining summand is supported in Maslov grading
zero. For a knot, the dimension of this remaining summand is determined
by its Alexander polynomial.
The proof combines a preferred-crossing
construction with reductions of generalized Seifert surfaces to planar
pretzel surfaces and a graded skein induction. We also obtain bounds
on the fixed points of monodromies for fibered positive knots.
\end{abstract}
\maketitle
\setcounter{tocdepth}{1}
\tableofcontents

\section{Introduction}
In \cite{CH}, the author and Hedden computed the next-to-top term of knot
Floer homology for positive braid closures. For a prime positive braid
knot $K$ of genus $g$, that theorem gives
\[
\HFK(K,g-1)\cong\F[-1],
\]
where $\F=\mathbb Z/2\mathbb Z$ and $\F[-1]$ is supported in
Maslov grading $(-1)$. Since $K$ is fibered, this rank-one calculation
and Ni's theorem \cite{Nifp2} imply that its monodromy is freely
isotopic to a diffeomorphism without fixed points. Beyond positive braids,
the next-to-top term of a fibered positive knot need not have rank one;
the knots $10_{154}$ and $10_{161}$ give examples \cite{Knotinfo}.
Nevertheless, we asked whether its summand in Maslov
grading $(-1)$ remains one-dimensional when the knot is prime
\cite[Question 4.4]{CH}. The main result of this paper answers this
question affirmatively. We review the conventions and reproduce the
filtered argument from \cite{CH} needed below, so readers need not
consult it to follow our proofs.

Recall that a knot or link is \emph{positive} if it admits an oriented
diagram with only positive crossings. Write $p(K)$ for the number of
nontrivial prime factors in the connected-sum decomposition of a knot
$K$, with $p(U)=0$ for the unknot. We use the grading
convention used in \cite{CH}, in which the top term of a fibered positive
knot $K$ is supported in Maslov grading zero and Alexander grading $g(K)$.

\begin{theorem}\label{thm:knots}
Let $K\subset S^3$ be a fibered positive knot of genus $g$. There is a
nonnegative integer $z(K)$ such that
\[
\HFK(K,g-1)\cong
\F^{\,z(K)}[0]\oplus\F^{\,p(K)}[-1].
\]
\end{theorem}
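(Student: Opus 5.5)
We prove the statement in three stages: reduce to prime knots, construct a preferred crossing, and run a graded skein induction.

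\emph{Reduction to prime knots.} Fibered positive knots are closed under connected sum, and their prime summands are again fibered and positive; a Murasugi-sum (Gabai) argument shows each summand of a fibered link is fibered. The K\"unneth formula gives $\HFK(K_1\#K_2)\cong\HFK(K_1)\otimes\HFK(K_2)$, additively in both gradings. Each top group $\HFK(K_i,g_i)\cong\F[0]$ is one-dimensional, because $K_i$ is fibered and the top term sits in Maslov grading zero under our convention. The Alexander grading $g-1$ summand of the tensor product is therefore
\[
\bigoplus_i \HFK(K_i,g_i-1)\otimes\bigotimes_{j\neq i}\F[0].
\]
So it suffices to show that for a prime, nontrivial fibered positive knot
\[
\HFK(K,g-1)\cong\F^{z}[0]\oplus\F[-1],
\]
with the unknot contributing nothing. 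Summing over the prime factors then yields $\F^{p(K)}[-1]$.

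\emph{Preferred crossing and skein sequence.} For a prime fibered positive knot, we would fix a positive diagram $D$ whose canonical Seifert surface realizes the fiber; by Stallings and Gabai, positive diagrams have minimal-genus, fibered canonical surfaces. In $D$ we choose a preferred crossing $c$ with two properties. Resolving $c$ should give a two-component positive link $L_0$ that is still fibered, of Euler characteristic one larger. Changing $c$ should give a positive-or-simpler knot $K_-$ whose genus drops, so that it does not contribute in Alexander grading $g-1$ beyond lower-order terms. We then use the oriented skein exact triangle of Ozsv\'ath--Szab\'o, $K_+\to K_-\to L_0$, with the Alexander and Maslov grading shifts tracked as in the filtered argument from \cite{CH}. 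In the relevant Alexander grading this yields an exact sequence in which $\HFK(K,g-1)$ is sandwiched between $\HFK(L_0,\cdot)$ at its top-minus-one level (tensored with the standard $V$ factor) and a term from $K_-$. This leads us to prove a statement for links along the way. For a fibered positive link $L$ with $\mu$ components, the next-to-top term is $\F^{z}[0]\oplus\F^{p(L)+\mu-1}[-1]$, which matches the abstract. We induct on the pair (crossing number, $-\chi$).

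\emph{Base cases and main obstacle.} The induction terminates at generalized Seifert surfaces that reduce, by deplumbing Hopf bands and by Murasugi desums, to planar pretzel surfaces. There $\HFK$ is computed directly from the Alexander polynomial and thinness: these pretzel links are alternating, so their homology is $\delta$-graded, which determines the Maslov gradings. The main obstacle is the preferred-crossing step. We must guarantee a crossing whose skein triangle has connecting maps that are forced, by grading alone, to be zero or of full rank in grading $(-1)$. Only then can the number of $[-1]$ summands be read off exactly, rather than merely bounded above. We would first try to choose $c$ inside a twist region adjacent to a Seifert circle that meets two distinct neighbors, so that primeness guarantees $L_0$ stays non-split and its factor count changes by exactly the component increment. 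This makes the $(-1)$-rank bookkeeping $p+\mu-1$ invariant.

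Finally, once the Maslov grading $(-1)$ part is pinned down, the rest of the next-to-top group must lie in grading $0$. This follows from the same grading constraints in the triangle, since every term there is supported in gradings $0$ and $-1$. We then define $z(K)$ as the dimension of this grading-zero summand. For knots, the Euler characteristic gives $z(K)-p(K)=\pm a_{g-1}$, the next-to-top coefficient of the Alexander polynomial, up to the sign convention.
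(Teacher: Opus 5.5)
\textbf{There is a genuine gap, at exactly the step you flag as the main obstacle.} Your plan treats the crossing-changed knot $K_-$ as negligible in Alexander grading $g-1$ because its genus drops. The opposite is true. Since $K$ is strongly quasipositive, $\tau(K)=g$. A positive-to-negative crossing change lowers $\tau$ by at most one, so $g(K_-)\ge\tau(K_-)\ge g-1$. Hence if the genus drops at all, it drops by exactly one, and $\HFK(K_-,g-1)$ is then the nonzero top group of $K_-$. For the paper's crossing this is the equality $g(L_-)=g(L_+)-1$, obtained from the quasipositive generalized Seifert surface of \cite{FLL}.

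Your premise is in fact incompatible with the theorem. If $\HFK(K_-,g-1)$ vanished, then in the normalization of Section~\ref{sec:knot-floer} exactness would identify $\HFK_{-1}(K,g-1)$ with $Q_{-1}(g-1)=\HFK_{-1}(L_0,g(L_0)-1)$. For the fibered positive smoothing you want, that group has dimension $p(L_0)+1\ge2$ by the very link statement you propose to prove, not $1$. The surplus must be killed by the connecting map $g_{g-1}\colon\HFK_0(K_-,g-1)\to Q_{-1}(g-1)$. The heart of the proof is showing that this map has rank exactly one.

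Grading alone does not decide this rank, and your proposal lacks the three ingredients the paper uses. Write $a=g(L_+)-1$, since the induction runs over links.
\begin{enumerate}
\item \emph{Concentration}, $\HFK(L_-,a)\cong\F^{\,b}[0]$ (Lemma~\ref{lem:concentration}). This is where the generalized Seifert surface actually enters. It is reduced to the alternating pretzel surface with boundary $P(-2,\ldots,-2)$ by Hopf deplumbings and Murasugi desums that preserve the graded top term (Proposition~\ref{prop:top-murasugi}). These pretzel surfaces are not base cases of the induction; those are the unknot and the positive Hopf link. Concentration is what makes $h_a$ surjective onto $\HFK_{-1}(L_+,a)$ and forces $\HFK_M(L_+,a)=0$ for $M\notin\{0,-1\}$. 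Your last paragraph assumes it without proof.
\item \emph{The upper bound} $\operatorname{rank}g_a\le1$. This comes from the independent graded bound $\dim_{\F}\HFK_{-1}(L,g(L)-1)\ge|L|$ (Proposition~\ref{prop:graded-bound}), proved via the Binns--Dey boundary-component construction and right-veering monodromy.
\item \emph{The lower bound} $\operatorname{rank}g_a\ge1$. This uses a Maslov-zero cycle of filtration $a$ supplied by strong quasipositivity of the almost positive link $L_-$, injectivity of $g_*$ on total homology, and Lemma~\ref{lem:filtered-observation}. Applying that lemma to $Q$ forces you to strengthen the inductive hypothesis: $H_{-1}(C(L))$ must have a basis of cycles with independent leading terms in grading $g(L)-1$.
\end{enumerate}

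Your choice of crossing is also not adequate. The crossing must make $L_0$ prime, not merely non-split (which is automatic for fibered links), so that $Q_{-1}(a)$ has dimension exactly $|L_+|+1$. It must also lie at a leaf of the reduced Seifert tree for the surface reduction to work. The paper obtains both properties from a leaf of a spanning tree of the interlacement graph (Lemmas~\ref{lem:interlacement} and~\ref{lem:prime-smoothing}). Your proposed choice is not shown to give either.
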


\noindent The theorem determines both the Maslov support and the contribution of
the prime decomposition. For a prime fibered positive knot, the
Maslov-$(-1)$ summand remains one-dimensional, and any additional summand in the
next-to-top term lies in Maslov grading zero. Together with the
Alexander polynomial, this determines the entire next-to-top term,
as explained below. In particular, we obtain the following detection
statement within the class of fibered positive knots.

\begin{corollary}\label{cor:prime}
For a fibered positive knot $K$ of genus $g$,
\[
p(K)=\dim_{\F}\HFK_{-1}(K,g-1).
\]
A nontrivial fibered positive knot is prime if and only if this dimension
is one.
\end{corollary}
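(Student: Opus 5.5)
The corollary follows from Theorem~\ref{thm:knots} by reading off gradings. It also needs the fact that the class of fibered positive knots is closed under taking prime factors.

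Let $K$ be a fibered positive knot of genus $g$. By Theorem~\ref{thm:knots},
\[
\HFK(K,g-1)\cong\F^{\,z(K)}[0]\oplus\F^{\,p(K)}[-1].
\]
Taking the Maslov grading $(-1)$ part of this direct sum kills the summand $\F^{\,z(K)}[0]$, because it is supported entirely in Maslov grading zero. What remains is $\HFK_{-1}(K,g-1)\cong\F^{\,p(K)}$, which gives the identity $p(K)=\dim_{\F}\HFK_{-1}(K,g-1)$. This holds for all fibered positive knots, including the unknot. For the unknot we have $g=0$, so the group in Alexander grading $-1$ vanishes, consistent with $p(U)=0$.

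For the second statement, let $K$ be nontrivial. By definition, $K$ is prime exactly when its connected-sum decomposition has a single nontrivial factor, that is, when $p(K)=1$. By the identity just proved, this is equivalent to $\dim_{\F}\HFK_{-1}(K,g-1)=1$.

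The only point needing care is how $p(K)$ is defined. It is defined through the prime decomposition of $K$ in $S^3$, and this decomposition is unique by Schubert's theorem. So $p(K)$ is well defined, and no positivity or fiberedness of the individual factors is needed: the theorem's right-hand side already uses this same $p(K)$. There is therefore no real obstacle here, since all the substance lies in Theorem~\ref{thm:knots}.
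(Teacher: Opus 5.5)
Your proof is correct and matches the paper's: read off the Maslov-$(-1)$ summand $\F^{\,p(K)}[-1]$ from Theorem~\ref{thm:knots}, and then use that a nontrivial knot is prime exactly when $p(K)=1$. Your opening remark that closure of fibered positive knots under prime factors is needed is unnecessary here, as your last paragraph itself recognizes; that fact is used in the composite case of the proof of Theorem~\ref{thm:links}, not in this corollary.
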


For a nontrivial fibered positive knot $K$ of genus $g$, the remaining
dimension $z(K)$ is determined by the Alexander polynomial.
Write its symmetrized Alexander polynomial as
\[
\Delta_K(t)=\sum_{j=-g}^{g}a_j(K)t^j,\qquad
a_g(K)=1,\quad a_{-j}(K)=a_j(K).
\]
Here $a_{g-1}(K)$ denotes the coefficient of $t^{g-1}$,
whether or not it is zero. Since the graded Euler characteristic of knot
Floer homology is the Alexander polynomial \cite{OSknot,Rthesis},
Theorem~\ref{thm:knots} gives
\[
a_{g-1}(K)=z(K)-p(K).
\]

\begin{corollary}\label{cor:alexander}
For a nontrivial fibered positive knot $K$ of genus $g$,
\[
\HFK(K,g-1)\cong
\F^{\,a_{g-1}(K)+p(K)}[0]
\oplus\F^{\,p(K)}[-1].
\]
In particular, for a prime fibered positive knot,
\[
\HFK(K,g-1)\cong
\F^{\,a_{g-1}(K)+1}[0]\oplus\F[-1] \text{ and }
\dim_{\F}\HFK(K,g-1)=a_{g-1}(K)+2.
\]
\end{corollary}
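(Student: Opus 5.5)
Corollary~\ref{cor:alexander} is a formal consequence of Theorem~\ref{thm:knots} and the Euler characteristic identity, so the plan is to take Theorem~\ref{thm:knots} as given and only compare graded Euler characteristics.

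By Theorem~\ref{thm:knots} there is a nonnegative integer $z(K)$ with
\[
\HFK(K,g-1)\cong\F^{\,z(K)}[0]\oplus\F^{\,p(K)}[-1].
\]
The graded Euler characteristic of knot Floer homology is the symmetrized Alexander polynomial \cite{OSknot,Rthesis}:
\[
\sum_{j}\sum_{m}(-1)^m\dim_{\F}\HFK_m(K,j)\,t^j=\Delta_K(t).
\]
We need the sign convention to be consistent with the normalization that the top term sits in Maslov grading zero and $a_g(K)=1$. This holds: for a fibered knot $\HFK(K,g)\cong\F$, and here it is supported in grading $0$, giving coefficient $+1$.

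Next, extract the coefficient of $t^{g-1}$. Maslov grading $0$ contributes $+z(K)$ and Maslov grading $-1$ contributes $-p(K)$. Hence
\[
a_{g-1}(K)=z(K)-p(K),
\qquad\text{so}\qquad
z(K)=a_{g-1}(K)+p(K).
\]
Here $a_{g-1}(K)$ is the coefficient of $t^{g-1}$, possibly zero. Note that $g\ge 1$ because $K$ is nontrivial, so $t^{g-1}$ is a genuine Alexander grading. When $g=1$ it is the middle coefficient $a_0$, and the formula is unchanged. Substituting $z(K)=a_{g-1}(K)+p(K)$ into Theorem~\ref{thm:knots} gives the first displayed isomorphism.

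For the prime case, take $p(K)=1$. This gives $\F^{\,a_{g-1}(K)+1}[0]\oplus\F[-1]$, and adding ranks gives total dimension $a_{g-1}(K)+2$.

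No step is a real obstacle. The only point needing care is the sign and grading convention: the Maslov-grading parity must match the sign in $\chi$. This is fixed by the top-term normalization checked above.
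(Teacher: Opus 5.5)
Your proposal is correct and follows the paper's proof: take Theorem~\ref{thm:knots} as input, read off $a_{g-1}(K)=z(K)-p(K)$ from the graded Euler characteristic in Alexander grading $g-1$, and set $p(K)=1$ for the prime case. Your extra check that the Maslov-zero top term is consistent with $a_g(K)=1$ is a harmless addition that the paper leaves implicit.
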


For prime knots in this class, the penultimate Alexander coefficient
is therefore at least $(-1)$ and determines the entire Maslov-graded
next-to-top term. The coefficient also gives a bound
on the fixed points of the monodromy: applying
\cite{Nifp2}, the monodromy is freely isotopic to a diffeomorphism with
at most $a_{g-1}(K)+1$ fixed points.

\begin{corollary}\label{cor:fixed-points}
Let $K$ be a prime fibered positive knot with $a_{g-1}(K)=-1$. Then its
monodromy is freely isotopic to a diffeomorphism without fixed points.
\end{corollary}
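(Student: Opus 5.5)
This corollary follows directly from Corollary~\ref{cor:alexander} together with Ni's theorem \cite{Nifp2}, so the plan is to feed the hypothesis into the graded computation.

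Since $K$ is prime and nontrivial, $p(K)=1$. Substituting $a_{g-1}(K)=-1$ into the prime case of Corollary~\ref{cor:alexander} gives
\[
\HFK(K,g-1)\cong\F^{\,0}[0]\oplus\F[-1]\cong\F[-1].
\]
Thus the next-to-top term has total rank one. Underneath, Theorem~\ref{thm:knots} and the Euler characteristic identity $a_{g-1}(K)=z(K)-p(K)$ force $z(K)=0$.

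Next, recall that a positive knot is fibered exactly when its top term $\HFK(K,g)$ has rank one; here fiberedness is a hypothesis anyway. Ni's result \cite{Nifp2} says the following. For a fibered knot of genus $g\ge 1$ in $S^3$, the monodromy $\phi$ is freely isotopic to a diffeomorphism with at most $\operatorname{rank}\HFK(K,g-1)-1$ fixed points. Equivalently, $\HFK(K,g-1)$ is the Floer homology of the mapping class of $\phi$ (up to a suitable twist), and this is rank one precisely when $\phi$ can be made fixed-point free. Applying this with rank one gives zero fixed points, as claimed.

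The only real care needed is in matching hypotheses. First, $g\ge 1$ holds because $K$ is nontrivial; for genus one, the only prime fibered positive knot is the trefoil, whose next-to-top term is $\F[-1]$ and whose monodromy is periodic of order six without fixed points, which is consistent. Second, one must confirm that Ni's bound is stated for total rank, independent of Maslov grading and of our grading convention. Since the total rank here is unambiguous, this is straightforward. There is no serious obstacle: all the substantive work lies in Theorem~\ref{thm:knots}.
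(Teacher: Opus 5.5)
Your proposal is correct and follows the paper's proof: with $p(K)=1$ and $a_{g-1}(K)=-1$, Corollary~\ref{cor:alexander} gives $\HFK(K,g-1)\cong\F[-1]$ of rank one, and Ni's bound of at most $\operatorname{rank}\HFK(K,g-1)-1$ fixed points \cite{Nifp2} then gives none. Only that upper bound is used, so drop the ``precisely when'' equivalence, which is not what \cite{Nifp2} asserts; your separate check of the genus-one case (the positive trefoil) is a harmless extra precaution in case that result assumes $g>1$.
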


\noindent The coefficient condition in this statement is essential to the
rank-one consequence of our theorem. For positive braid knots the
Maslov-zero summand vanishes \cite{CH}; for general fibered positive
knots it is retained in Theorem~\ref{thm:knots}.

Our result extends to links. Following \cite{CH}, we define the
\emph{genus} of an oriented link $L$ by
\[
g(L)=\min_{\Sigma}\left\{\frac{|L|-\chi(\Sigma)}2\;\middle|\;\Sigma\text{ is a
(possibly disconnected) Seifert surface of }L\right\},
\]
where $|L|$ denotes its number of components. For a fibered link with
fiber surface $F$, this gives
\[
g(L)=\frac{|L|-\chi(F)}2=i(F).
\]
Here $i(F)$ is the surface index used in \cite{CHS22}, which
coincides with the highest Alexander grading of the knot Floer
homology $\HFK(L)$. A fibered link in $S^3$ is necessarily
non-split, so we do not impose this as a separate hypothesis.
As above, $p(L)$ denotes the number of
nontrivial prime connected-sum factors, with $p(U)=0$, and the
Maslov grading is integer-valued.

\begin{theorem}\label{thm:links}
Let $L\subset S^3$ be a fibered positive link. Then, for some
nonnegative integer $z(L)$,
\[
\HFK(L,g(L)-1)\cong
\F^{\,z(L)}[0]
\oplus\F^{\,p(L)+|L|-1}[-1].
\]
Consequently,
\[
p(L)=\dim_{\F}\HFK_{-1}(L,g(L)-1)-|L|+1.
\]
\end{theorem}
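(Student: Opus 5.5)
I will prove Theorem~\ref{thm:links} by induction on the number of crossings of a positive diagram, running a graded skein exact triangle at a well-chosen crossing. Theorem~\ref{thm:knots} is the special case $|L|=1$. The first reduction handles connected sums. Knot Floer homology satisfies the K\"unneth formula $\HFK(L_1\#L_2)\cong\HFK(L_1)\otimes\HFK(L_2)$, and Murasugi sums of fiber surfaces preserve fiberedness and positivity. Since each top term is $\F[0]$ in Alexander grading $g$, the next-to-top term of a connected sum is the direct sum of the next-to-top terms of the factors, with Maslov gradings unchanged. Here $|L_1\#L_2|-1=(|L_1|-1)+(|L_2|-1)$ and $p$ is additive. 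So the whole formula is additive under connected sum, and it suffices to treat prime fibered positive links. For these the claim is that the Maslov-$(-1)$ part has rank exactly $|L|$, and everything else in Alexander grading $g-1$ sits in Maslov grading $0$.

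For a prime fibered positive link I will choose a preferred crossing $c$ in a positive (reduced) diagram $D$. Resolving $c$ gives the oriented resolution $L_0$. Because the fiber of a positive diagram is the Seifert surface of that diagram, $L_0$ is again fibered and positive, with one more component and the same genus $g(L_0)=g(L)$ when $c$ separates two Seifert circles that are joined by at least two crossings. Changing $c$ gives $L_-$, which has fewer positive crossings after simplification and genus $g-1$ or less in the relevant sense. The skein exact triangle of Ozsv\'ath--Szab\'o, in the filtered form reproduced from \cite{CH}, relates $\HFK(L)$, $\HFK(L_-)$ and $\HFK(L_0)\otimes V$ in Alexander grading $g-1$, with explicit Maslov shifts. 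In the top two Alexander gradings, $L_-$ contributes only through its top term, which is known, or not at all if its genus drops. The induction hypothesis applied to $L_0$ (where $|L_0|=|L|+1$), together with the determined top terms, then pins down $\HFK(L,g-1)$. Tracking Maslov degrees through the triangle shows that the Maslov-$(-1)$ rank decreases by exactly one when passing from $L_0$ to $L$ (from $|L_0|$ to $|L|$), provided the connecting map is nonzero on a specific class. The Maslov-$0$ part passes through without constraint, so its rank $z(L)$ is left uncontrolled, which is exactly what the statement allows.

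The base cases are the links where no such crossing exists, meaning every pair of adjacent Seifert circles is joined by a single crossing or the diagram is a generalized Seifert surface of special shape. Here I would reduce, by deplumbing Hopf bands and using Murasugi sum decompositions, to planar pretzel surfaces: connected sums and plumbings of torus links $T(2,n)$ and positive pretzel links $P(p_1,\dots,p_k)$. Their $\HFK$ is computable directly, for instance because they are alternating, so $\HFK$ is thin and determined by the Alexander and Conway polynomials together with the signature. One checks the formula on this family.

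The main obstacle is the preferred-crossing step. I must show that every prime fibered positive diagram not in the base family has a crossing whose resolution keeps the link prime, fibered and positive, or else that changes in primeness are accounted for correctly by $p(L_0)$. I must also show that the relevant connecting map in the triangle has full rank on the Maslov-$(-1)$ part. My first attempt would be to take $c$ in a twist region of maximal length between two Seifert circles, and use Gabai's Murasugi-sum criterion to control fiberedness. The nonvanishing of the map I would obtain from the rank of the top term of $L_-$ combined with the Euler characteristic, namely the Conway skein relation for the coefficient of $a_{g-1}$.
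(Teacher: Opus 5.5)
Your overall shape matches the paper: reduce to prime links by the K\"unneth formula, then run the skein triangle at a crossing whose smoothing stays prime, fibered and positive. The gap is that the step you call ``known'' is the heart of the proof.

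\textbf{The top term of $L_-$.} In Alexander grading $a=g(L)-1$ the triangle involves $\HFK(L_-,a)$. The link $L_-$ has a diagram with one negative crossing. It is in general neither positive nor fibered, so neither the induction nor the rank-one top term of fibered links applies to it. You must prove two things.
\begin{enumerate}
\item[(i)] $g(L_-)=a$ exactly. If the genus dropped, $\HFK_{-1}(L,a)\cong Q_{-1}(a)$ would have rank $|L|+1$, contradicting the theorem.
\item[(ii)] $\HFK(L_-,a)\cong\F^{\,b}[0]$ with $b\ge1$.
\end{enumerate}
Without (ii) you get neither the vanishing of $\HFK_M(L,a)$ for $M\notin\{0,-1\}$ nor any control of the Maslov-$(-1)$ part. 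Moreover, this top term need not have rank one: the residual pretzel surfaces give $\F^{\,m}[0]$. Its Maslov concentration also depends on the crossing chosen: when $L_-$ splits, as for a crossing of one summand of $H_+\#H_+$, it contains an $\F[-1]$.

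The paper proves (i) and (ii) as Lemma~\ref{lem:concentration}. The crossing is taken in a leaf bundle chosen via the interlacement graph. For that crossing, the Feller--Lewark--Lobb generalized Seifert surface of $D_-$ is connected and quasipositive, with $\chi(F_-)=\chi(F_+)+2$. It is then reduced to the planar surface of $P(-2,\ldots,-2)$ by isotopies, Hopf deplumbings and removal of fibered positive Murasugi summands. These operations preserve the graded top term by Cheng--Hedden--Sarkar, and the final top term is read off from alternating thinness. So pretzel surfaces enter through $L_-$, not as base cases. The base family you describe does not arise for reduced diagrams, since every bundle has multiplicity at least two; the only base cases needed are $U$ and $H_+$.

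\textbf{The rank of $g_a$.} Even granting concentration, exactness gives $\dim\HFK_{-1}(L,a)=\dim Q_{-1}(a)-\operatorname{rank}g_a$ with $\dim Q_{-1}(a)=|L|+1$. So you need $\operatorname{rank}g_a=1$ exactly, and nonvanishing on one class is not enough when $b>1$.
\begin{itemize}
\item The upper bound needs an independent graded inequality $\dim\HFK_{-1}(L,g-1)\ge|L|$ for fibered strongly quasipositive links (Proposition~\ref{prop:graded-bound}). This is a Maslov-graded version of the Binns--Dey boundary-component argument, using right-veering monodromy.
\item Your proposed source of nonvanishing cannot work. Since $\dim\HFK_0(L,a)$ is unconstrained, the Euler characteristic in grading $a$ equals $\dim Q_0(a)+b-(|L|+1)$ whatever $\operatorname{rank}g_a$ is. The Conway relation therefore carries no information about this rank.
\item The paper instead uses that $L_-$ is strongly quasipositive. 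So a Maslov-zero cycle $\xi$ of filtration $a$ generates $H_0(C(L_-))$, and $g_*$ is injective on total homology. A strengthened inductive hypothesis says $H_{-1}$ has a basis of cycles with independent leading terms in grading $a$. Via Lemma~\ref{lem:filtered-observation}, this shows no nonzero class of $H_{-1}(Q)$ lives below grading $a$, forcing $g_a(\xi)\ne0$.
\end{itemize}

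\textbf{The smoothing.} $|L_0|=|L|+1$ is not automatic: smoothing a crossing of $H_+$ or of $T(2,4)$ merges components. You must also treat the case $Q=C(L_0)\otimes J$. You also need a crossing whose smoothing provably stays prime, so that $p(L_0)=1$; choosing a maximal twist region does not guarantee this.
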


For a prime fibered positive link the Maslov-$(-1)$ rank is exactly its number of
components. Split unions of links satisfying the theorem can be treated
by the usual product formula; the additional tensor factor
$\F[0]\oplus\F[-1]$ must then be included for each split union.

The theorem also gives an obstruction to positivity and evidence for
a possible extension to strongly quasipositive knots.

For a nontrivial fibered knot $K$ of genus $g$, let $M$ be the
Maslov grading of its rank-one top term. Ni's graded rank inequality
\cite[Theorem 1.1]{Ni23} gives
\[
\dim_{\F}\HFK_{M-1}(K,g-1)
\ge\dim_{\F}\HFK_M(K,g)=1.
\]
Here we use the rank-one consequence over $\F$, which follows from Ni's
rational-coefficient statement by universal coefficients.
Theorem~\ref{thm:knots} says that equality holds for prime fibered
positive knots, for which $M=0$. It therefore provides an obstruction
to positivity among prime fibered knots: if
\[
\dim_{\F}\HFK_{M-1}(K,g-1)\ne1,
\]
then $K$ is not positive. A comparison with the knot Floer homology
recorded in KnotInfo \cite{Knotinfo} illustrates the effectiveness of
this obstruction at small crossing number. Among the $1,246$ prime
fibered knot types through twelve crossings, counted up to mirror,
only $42$ have this dimension equal to one. Of these, $33$ admit a
positive representative. The remaining nine admit no positive
representative, but each admits a strongly quasipositive
representative.\footnote{The nine knots are $10_{145}$,
$12n_{148}$, $12n_{276}$, $12n_{329}$, $12n_{366}$, $12n_{402}$,
$12n_{528}$, $12n_{642}$, and $12n_{660}$.}
Conversely, every prime fibered knot type in this range admitting a
strongly quasipositive representative satisfies the rank condition.

Thus, among prime fibered knot types through twelve crossings, counted
up to mirror, the condition selects precisely those admitting a strongly
quasipositive representative. This provides further evidence for
\cite[Second half of Question 4.4]{CH}, which asks whether
$\dim_{\F}\HFK_{-1}(K,g(K)-1)=1$ for every prime fibered strongly
quasipositive knot $K$. That question concerns this rank-one summand;
it does not assert the full two-grading decomposition of
Theorem~\ref{thm:knots}.
We will explore the converse implication in a sequel to this paper,
written jointly with Dongtai He.

\subsection*{Strategy of proof}

The proof uses the oriented skein exact triangle and its compatibility
with the Alexander filtration \cite{OSknot,OSes}.
By a theorem of Ozawa \cite{Ozawa}, a reduced positive diagram of a
nontrivial knot is prime if and only if the knot is prime. Thus the
connected-sum decomposition can be read from a reduced positive diagram.

The geometric issue is to choose a crossing whose oriented smoothing
remains prime and fibered, while controlling the top term of the
crossing-changed link. For the latter, we first establish that its
generalized Seifert surface is connected and has maximal Euler
characteristic, and then simplify that surface by operations preserving
the graded top term. The positive Hopf link is treated separately:
its smoothing is the unknot, whereas its crossing change produces the
two-component unlink. A positive
diagram of a fibered link has reduced Seifert graph a tree
\cite{Stoimenow05}. We encode the cyclic order of its crossing bundles
in a connected interlacement graph. A leaf of a spanning tree of
this graph identifies a leaf bundle of the Seifert tree in which
we can smooth a crossing while preserving primeness. The smoothing
then supplies a smaller prime fibered positive link.
The crossing change gives a diagram with a single negative crossing,
whose top term is governed by the following lemma.

\begin{lemma}\label{lem:concentration}
Let $D$ be a connected reduced positive diagram of a prime fibered
positive link $L_+$, other than the positive Hopf link. There is a
crossing of $D$ incident to a leaf of its reduced Seifert graph whose
oriented smoothing is prime and for which the crossing-changed link
$L_-$ satisfies, with
$a=g(L_+)-1=g(L_-)$,
\[
\HFK(L_-,a)\cong\F^{\,b}[0]
\]
for some $b\ge1$.
\end{lemma}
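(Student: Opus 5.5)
We prove Lemma~\ref{lem:concentration} in two stages: first a combinatorial choice of crossing in $D$, then a computation of the top term of the crossing-changed link through its generalized Seifert surface.

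\emph{Stage one: choosing the crossing.} Since $L_+$ is fibered and $D$ is positive, the reduced Seifert graph $\Gamma$ of $D$ is a tree by \cite{Stoimenow05}. Each edge of $\Gamma$ is a bundle of parallel crossings between two Seifert circles. Because $D$ is not the positive Hopf link, $\Gamma$ has at least one edge. If it has exactly one edge, the bundle has at least three crossings, or the diagram would be the Hopf link or non-reduced.

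Fix a Seifert circle $C$ that is not a leaf, or either circle when $\Gamma$ is a single edge. Record the cyclic order in which the crossing bundles meet $C$ along its two sides. From this, form the interlacement graph, whose vertices are bundles and whose edges join bundles that interleave around a common circle. Primeness of $D$, which by Ozawa's theorem \cite{Ozawa} is equivalent to primeness of $L_+$, should translate into connectivity of this graph. Take a spanning tree and a leaf $\beta$ of it. Choose $\beta$ so that it corresponds to a leaf edge of $\Gamma$, using that leaf edges of $\Gamma$ are the extremal bundles. Take a crossing $c$ in $\beta$.

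Smoothing $c$ decreases the bundle multiplicity by one.
\begin{enumerate}[label=(\roman*)]
\item If the multiplicity stays at least two, the reduced Seifert graph is still a tree, so the result is fibered. It remains prime because the interlacement graph is unchanged.
\item If the multiplicity drops to one, the leaf circle is joined by a nugatory crossing, which we remove. The remaining graph is the old one with the leaf $\beta$ deleted, which stays connected because $\beta$ was a leaf of the spanning tree.
\end{enumerate}
In both cases the smoothing $L_0$ is prime, fibered and positive, with $g(L_0)=g(L_+)-1$ or the appropriate component-count shift. Since $g(L_+)-1=g(L_-)$ must hold, we check genera from the Seifert algorithm: Seifert's surface for the diagram of $L_-$ has the same Euler characteristic as for $L_+$.

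\emph{Stage two: the top term of $L_-$.} The diagram of $L_-$ has one negative crossing $c$. Its canonical Seifert surface $S_-$ is the plumbing/Murasugi sum of positive Hopf-band-type pieces and one negatively twisted band in the bundle $\beta$. We first show that $S_-$ is connected and realizes $g(L_-)=g(L_+)-1$. When $\beta$ has multiplicity $m\ge 2$, the negative crossing cancels against a positive one in the same bundle by a Reidemeister II move, reducing $\chi$ by two and the genus by one. The resulting diagram is positive, possibly after removing nugatory crossings, but with a non-tree reduced Seifert graph only if the multiplicity drops to zero. Handling the case where it drops to zero means passing to a generalized Seifert surface.

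We then desummand the surface, via Murasugi sum decomposition along the tree $\Gamma$ and Gabai's theorem, into pieces. Each piece is either a fibered positive surface or a planar pretzel surface $P(1,\dots,1,-1,\dots)$ containing the twisted band at $c$. The top term of knot Floer homology is multiplicative under Murasugi sum at the top Alexander grading, up to a grading shift that we track, and Maslov gradings add. So it suffices to show two things: positive fibered pieces contribute $\F[0]$, and the planar pretzel piece has top term $\F^{b}[0]$ with $b\ge1$. The latter follows from a direct computation or a graded skein induction on the number of bands: for pretzel links of the form $P(-1,1,\dots,1)$ or $P(-1,2k_1,\dots)$, the top group is computable from thin/alternating structure. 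Alternating pretzel links are Floer-thin, so their top group lies in a single Maslov grading, and that grading is the one fixed by signature, namely $0$ in the paper's convention. Nonvanishing, $b\ge1$, follows from maximality of the Euler characteristic, since knot Floer homology detects the Thurston norm.

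\emph{Main obstacle.} The hardest step is stage two's reduction. We must prove that the generalized Seifert surface of $L_-$ is connected and norm-minimizing, and that the decomposition operations preserve the graded top term, especially that the Maslov grading stays $0$ rather than shifting. I would try first to carry out the Murasugi decomposition explicitly along the path in $\Gamma$ from $\beta$ to $C$, reducing to a single pretzel surface. I would then verify the Maslov grading there by the skein triangle relating it to a positive pretzel link and an unlink.
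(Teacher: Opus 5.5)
Your outline has the right shape (interlacement graph, leaf bundle, generalized Seifert surface, graded Murasugi formula, pretzel computation), but there are concrete gaps in both stages.

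\textbf{Stage one.} Claim (i) is false as stated, because smoothing a crossing of $\beta$ can destroy an interlacement. Take the tree $v$--$u$--$w$ with $\beta=e=uv$ of multiplicity three and $f=uw$ of multiplicity two, and suppose the attachments around $u$ are cyclically $e_1f_1e_2f_2e_3$. Then $f$ is the only neighbour of $e$. Smoothing $e_2$ makes $e_3e_1$ and $f_1f_2$ consecutive, so $e$ becomes isolated and the reduced smoothed diagram is composite; here it is $H_+\#H_+$. You must keep two $\beta$-crossings that witness an interlacement with a neighbour and smooth a third.

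Your genus check also goes the wrong way. The Seifert-algorithm surface of $D_-$ has the same Euler characteristic as $F_+$, so it only gives $g(L_-)\le g(L_+)$. To obtain $g(L_-)=g(L_+)-1$ you need two things: a surface whose Euler characteristic is larger by two, and a proof that it is norm-minimizing. The Feller--Lewark--Lobb surface $\Sigma'(D_-)$ supplies both: it uses all $s$ Seifert disks but only $c-2$ bands, and it is quasipositive.

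\textbf{Stage two.} This is where the content of the lemma lies, and the sketch does not supply it. The canonical surface $S_-$ cannot realize $g(L_+)-1$, for the same Euler-characteristic reason.

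The Reidemeister~II shortcut needs $c$ and another $\beta$-crossing to bound a bigon face. That means they must be consecutive around $u$ among attachments from \emph{both} sides of $u$. For $q=2$ this never happens under your hypotheses. If the two $\beta$-crossings were consecutive around $u$, no bundle could interlace $\beta$, which contradicts primeness; the one-edge case with $q=2$ is the excluded Hopf link. When the move does apply with $q\ge3$, $L_-$ is itself fibered and positive with $b=1$, so the shortcut covers only a special case.

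What is missing is the actual argument for $\Sigma'(D_-)$. First, you need its connectedness; for $q=2$ this comes from an interlacing bundle $uw$ attached to both arcs of $u$ cut by the marked crossings. Second, you need a reduction using only operations that provably preserve the absolutely graded top term:
\begin{enumerate}[label=(\alph*)]
\item splitting off canonical surfaces of positive tree diagrams along ordinary circles;
\item deleting one of two crossing intervals that are next to each other in the sense of \cite[Definition 2.3]{FLL}, as a positive Hopf deplumbing; in the nested case this uses $F=A*B$, $F'=A*B'$, $B=B'*\mathcal H_+$, so $A$ need not be fibered;
\item local isotopies at $c_+$.
\end{enumerate}

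Ordinary roots attached to both arcs of $u$ do not split off as Murasugi summands along $\Gamma$. After pruning they become bridges, that is, the $(-2)$-strips of the terminal planar surface of $P(-2,\ldots,-2)$. The terminal piece is therefore not a mixed-sign pretzel such as $P(1,\ldots,1,-1,\ldots)$. Mixed-sign pretzel diagrams are not alternating, so your appeal to thinness does not apply to them.

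Finally, the Maslov grading $0$ must be computed, not asserted. The Seifert matrix of $P_m$ is the negative $A_{m-1}$ Cartan matrix, so $\sigma=-(m-1)$. Then $M=A+(\sigma-|L|+1)/2$ vanishes at $A=m-1$.
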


Links admitting a diagram with exactly one negative crossing are
strongly quasipositive \cite{FLL}. This supplies a surviving
Maslov-zero class at the top of the Alexander filtration. The skein
induction requires the stronger conclusion in
Lemma~\ref{lem:concentration}: the entire top term must be
concentrated in Maslov grading zero.

To establish this concentration, we study the generalized Seifert
surface of \cite{FLL} for the crossing-changed diagram. We prove
that this quasipositive surface is connected and has maximal Euler
characteristic, thereby locating the top Alexander grading of $L_-$.
The original reduced Seifert tree then guides reductions by isotopy
and removal of positive fibered Murasugi factors. The graded
extremal Murasugi-sum formula of Cheng--Hedden--Sarkar \cite{CHS22} preserves
the top term through these reductions. The remaining surfaces are
planar pretzel surfaces with alternating boundaries, whose top terms
are concentrated in Maslov grading zero by
\cite{OSalternating,OSlink}.

We expect the same concentration for a broader class of links.

\begin{conjecture}\label{conj:almost-positive}
If a non-split link $L\subset S^3$ admits a diagram with exactly one
negative crossing, then
\[
\HFK(L,g(L))\cong\F^{\,b(L)}[0]
\]
for some positive integer $b(L)$.
\end{conjecture}

Lemma~\ref{lem:concentration} establishes this concentration for the
particular crossing-changed diagrams used in our induction.
Conjecture~\ref{conj:almost-positive} removes the restrictions on the
reduced Seifert graph and the position of the negative crossing.
Strong quasipositivity alone does not imply this concentration:
the strongly quasipositive, nonfibered knot $12n_{830}$ has
$\HFK(12n_{830},3)\cong\F[0]\oplus\F[1]$, as recorded in
KnotInfo \cite{Knotinfo}.

We also establish a graded lower bound for the broader class of
fibered strongly quasipositive links. This refines the rank bound of
Binns--Dey \cite[Theorem 4.1]{BD23} and supplies the other Floer
ingredient in the induction.

\begin{proposition}\label{prop:graded-bound}
If $L\subset S^3$ is a nontrivial fibered strongly quasipositive
link, then
\[
\dim_{\F}\HFK_{-1}(L,g(L)-1)\ge |L|.
\]
\end{proposition}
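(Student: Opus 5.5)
The plan is to combine an ungraded generator count with a grading argument that places each generator in Maslov grading $(-1)$.

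\emph{Geometric model for the next-to-top term.} Let $F$ be the fiber surface of $L$ and $\phi\colon F\to F$ its monodromy, with $i(F)=g(L)$. By the Ghiggini--Spano/Ni identification used for \cite[Theorem 1.1]{Ni23} and in \cite{Nifp2}, $\HFK(L,g(L)-1)$ is isomorphic to a fixed-point Floer homology of $\phi$. For a link I would pass to the knotification in $\#^{|L|-1}S^1\times S^2$ so that the statement reduces to a statement about a fibered knot. In this model, Binns--Dey \cite[Theorem 4.1]{BD23} obtain the ungraded bound $\dim_\F\HFK(L,g(L)-1)\ge|L|$. The generators come from the boundary components of $F$: after a small perturbation of $\phi$ near each component of $\partial F$, each component contributes one fixed point that survives in Floer homology. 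The first step is to check that these $|L|$ generators are homogeneous with respect to the Maslov grading. They are not necessarily homogeneous in the abstract fixed-point model, so I would reproduce them as explicit generators of a Heegaard diagram adapted to an open book for $L$, in the style of Honda--Kazez--Mati\'c.

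\emph{Fixing the grading.} Strong quasipositivity is used here. By Hedden, a fibered link is strongly quasipositive exactly when its open book supports the tight contact structure on $S^3$. For such an open book the top generator $c$, in Alexander grading $g(L)$, is the contact class. It is a cycle in Maslov grading $0$ and survives to generate $\wh{HF}(S^3)$ (respectively the top class of $\wh{HF}(\#^{|L|-1}S^1\times S^2)$ after knotification). In the adapted diagram, each boundary generator $y_j$ differs from $c$ by a single rectangle or bigon domain near the $j$-th binding component, which crosses the basepoint pair exactly once. That domain should have $n_z-n_w=1$ and Maslov index $1$, so $M(y_j)=M(c)-1=-1$ and $A(y_j)=g(L)-1$.

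\emph{Linear independence in homology.} I would argue that the $y_j$ are cycles and remain linearly independent in $\HFK(L,g(L)-1)$. The plan is to use the filtered differential from the top Alexander level together with the relation between $\wh{HF}(\#^{|L|-1}S^1\times S^2)$ and the link components: the $|L|-1$ classes in Maslov grading $-1$ of $(\F[0]\oplus\F[-1])^{\otimes(|L|-1)}$, plus the one class forced by Ni's inequality for the knotified knot, should detect $|L|$ independent Maslov-$(-1)$ classes.

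I expect this last step to be the main obstacle: the adapted-diagram generators might cancel against one another or against other generators in Alexander grading $g(L)-1$. My first attempt would be to compute a part of the spectral sequence. Since $c$ survives, its partners under the $H_1$-action in $\wh{HF}(\#^{|L|-1}S^1\times S^2)$ must be hit in Alexander grading $g(L)-1$ from Maslov $(-1)$ classes. Counting these with Ni's extra class should give the bound $|L|$.
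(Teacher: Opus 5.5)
The obstacle you flag in the last step is a genuine gap, and your proposed way around it does not work. Nothing you have said forces the $|L|-1$ Maslov-$(-1)$ classes of $H_*(C(L))\cong V^{\otimes(|L|-1)}$ to have representatives whose leading terms lie in Alexander grading $g(L)-1$. A priori they may be represented only lower in the filtration, and then they give no information about $\HFK_{-1}(L,g(L)-1)$. The existence of such representatives is exactly the strengthened assertion carried through the induction in the proof of Theorem~\ref{thm:links}, which is established only for positive links and uses this proposition as an input. Ni's inequality for the knotification gives one class in the right bigrading, but nothing shows it is independent of the others. Your claim that the $H_1$-partners of $c$ must be hit from Alexander grading $g(L)-1$ is also unsupported. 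More basically, knotification collapses the Alexander grading and so discards precisely the information that distinguishes your boundary generators $y_j$ from one another.

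Your grading step also has the disks pointing the wrong way. A Maslov-index-one bigon from the top cycle $c$ to $y_j$ with $n_z-n_w=1$ would have $n_z=1$ and $n_w=0$, as your computation $M(y_j)=M(c)-1$ requires, and so it would contribute $y_j$ to $\partial c$. In the Binns--Dey construction the disks run \emph{into} the Honda--Kazez--Mati\'c generator, which sits at the bottom of the filtration.

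The missing idea is to keep the multi-Alexander grading of $\wh{HFL}$ and to argue at the bottom. The paper uses the mirrored open book, whose monodromy $\phi^{-1}$ sends the chosen nonseparating arcs to the left. This holds because $\phi$ is right-veering, by tightness \cite[Proposition 5.24]{HR22} and \cite{HKM07}, and a fixed arc is excluded since it would force an $S^1\times S^2$ summand. In the resulting diagram for $L$, the distinguished generator $c$ spans the bottom group in multi-grading $\mathbf x$, with $\sum_j x_j=-g$. Multigraded symmetry from the Maslov-zero top gives $M(c)=-2g$. The disk at the $i$th boundary component gives $d_i$ with $\partial d_i=c$, $\partial_0 d_i=0$, and $\mathbf A(d_i)=\mathbf x+\mathbf e_i$. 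Each $d_i$ is nonzero in associated-graded homology by a two-level filtered argument: if $\partial e=d_i+f$ with $f\in\mathcal F_{-g}$, then $\partial f=c$, contradicting that $c$ is nonzero in $H_*(\mathcal F_{-g})$. Independence is then automatic, because the $d_i$ occupy distinct multi-gradings. Symmetry carries them to multi-gradings $-\mathbf x-\mathbf e_i$, collapsed grading $g-1$, and Maslov grading $(1-2g)-2(1-g)=-1$. No total-homology count and no appeal to Ni's inequality is needed.
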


Their theorem bounds the total rank of the next-to-top term for fibered
links. We retain the Maslov grading in their boundary-component
construction, using strong quasipositivity to fix the grading of the
top generator. Theorem~\ref{thm:links} shows that the bound is sharp for prime
fibered positive links. It remains natural to ask whether it is sharp
for every prime fibered strongly quasipositive link, extending
\cite[Second half of Question 4.4]{CH}.

These ingredients complete the skein induction. The preferred smoothing
supplies a smaller prime fibered positive link, while
Lemma~\ref{lem:concentration} confines the top term of $L_-$ to
Maslov grading zero. Together with
Proposition~\ref{prop:graded-bound}, this controls the rank of the
relevant skein map and determines the Maslov-$(-1)$ summand of the
next-to-top term. Exactness then confines the remaining summand to
Maslov grading zero.

\subsection*{Organization}

\hyperref[sec:knot-floer]{Section~\ref*{sec:knot-floer}} reviews
the filtered knot Floer complexes and skein maps, and proves the
graded lower bound used in the induction. In
\hyperref[sec:resolution]{Section~\ref*{sec:resolution}}, we
select a preferred crossing whose oriented smoothing is prime and
fibered. In
\hyperref[sec:crossing-change]{Section~\ref*{sec:crossing-change}},
we compute the graded top term after the crossing change by reducing
the generalized Seifert surface to a planar pretzel surface. Finally,
\hyperref[sec:theorem-proofs]{Section~\ref*{sec:theorem-proofs}}
proves the main theorems by induction on the number of crossings
and derives the corollaries.

\subsection*{Acknowledgements}

The author thanks Dongtai He and Matthew Hedden for helpful mathematical
discussions. The author was supported by NSFC grant No. 12401087.
The author used ChatGPT to help identify a gap in an earlier version
of the argument in
\hyperref[sec:resolution]{Section~\ref*{sec:resolution}}
and to assist with the exposition. The author takes full responsibility
for the contents of this paper.

\section{Knot Floer homology}\label{sec:knot-floer}

We recall the filtered structure and skein maps used in the proof,
with the conventions used in \cite[Section 3]{CH}. We then state a
lemma about reduced filtered chain complexes and prove
Proposition~\ref{prop:graded-bound}. All complexes are taken over
$\F=\mathbb Z/2\mathbb Z$, with the integer Maslov grading and the
collapsed Alexander grading.

\subsection{Filtered complexes and skein maps}

We use the knot Floer homology of oriented links introduced by
Ozsv\'{a}th and Szab\'{o}, defined through their knotification
construction \cite{OSknot}. We follow the
integer Maslov normalization used in \cite[Section 3]{CH}.
For an oriented link $L\subset S^3$, let $C(L)$ be a reduced
Alexander-filtered model of its knot Floer complex, as in
\cite[Section 3]{CH}. Its bigraded vector space is
$\HFK(L)$, and a canceling differential $\partial$ strictly lowers
the Alexander grading and lowers the Maslov grading by one. Its total
homology is
\begin{equation}\label{eq:total-homology}
H_*(C(L),\partial)\cong V^{\otimes(|L|-1)},
\qquad V=\F[0]\oplus\F[-1].
\end{equation}
Here ``top'' refers to the highest Alexander grading $g(L)$.
For a fibered link, knotification gives a fibered knot
\cite[Lemma 5.23]{HR22}, so the top term has rank one by
\cite[Theorem 1.1]{OScontact}. Let $\tau_{\mathrm{top}}(L)$ be
the least Alexander filtration level containing a cycle representing
the nonzero class in $H_0(C(L))$. For a fibered positive link, strong
quasipositivity gives
$\tau_{\mathrm{top}}(L)=g(L)$ by \cite[Theorem 5.22]{HR22}.
Hence the top generator survives in the highest Maslov grading of
total homology, which is zero in our normalization. Thus
\[
\HFK(L,g(L))\cong\F[0],
\]
and its generator is a cycle representing a nonzero class in
total homology. In the knot case, the underlying equality
$\tau(K)=g(K)$ for strongly quasipositive knots is due to Livingston
\cite{Livingston}; see also \cite[Theorem 1.1]{H10}.

Let $(L_+,L_-,L_0)$ be an oriented skein triple, where $L_0$ is the
oriented smoothing. The filtered skein maps
\cite{OSknot,OSes,CH} take the form
\[
C(L_+)\xrightarrow{f}C(L_-)\xrightarrow{g}Q
\xrightarrow{h}C(L_+),
\]
where
\[
Q=\begin{cases}
C(L_0),& |L_0|=|L_+|+1,\\
C(L_0)\otimes J,& |L_0|=|L_+|-1,
\end{cases}
\qquad
J=\F[0,1]\oplus\F^{\,2}[-1,0]\oplus\F[-2,-1].
\]
Here $\F[M,A]$ denotes a one-dimensional vector space in Maslov
grading $M$ and Alexander grading $A$, and $J$ has zero differential.
The maps $f$ and $h$ preserve the Maslov grading, while $g$ lowers it
by one. All three maps preserve the Alexander filtration. Writing
$Q_M(a)$ for the summand of $Q$ in bidegree $(M,a)$, their
Alexander-grading-preserving components give the exact sequence
\[
\begin{aligned}
\cdots\longrightarrow\HFK_M(L_+,a)
&\xrightarrow{f_a}\HFK_M(L_-,a)\\
&\xrightarrow{g_a}Q_{M-1}(a)
\xrightarrow{h_a}\HFK_{M-1}(L_+,a)
\longrightarrow\cdots.
\end{aligned}
\]

On total homology, the map $G=g_*$ is injective, the map $H=h_*$ is
surjective, and $\ker H=\operatorname{im}G$. More precisely, if
$W=V^{\otimes(|L_+|-1)}$, the induced sequence is
\[
0\longrightarrow W\xrightarrow{G}W[-1]\oplus W[0]
\xrightarrow{H}W\longrightarrow0.
\]
The map $G$ identifies $W$ with the first summand, shifting Maslov
grading down by one, and $H$ identifies the second summand with $W$.
This description is the same in both component-count cases above.

We finish these preliminaries with a lemma about reduced filtered
chain complexes that will be used in the induction.
For a chain in a reduced filtered complex, its \emph{leading term}
is its component in the largest Alexander grading in which it has
nonzero support. The following lemma isolates the filtered argument
of Cheng--Hedden \cite[proof of Theorem 3.5]{CH}.

\begin{lemma}\label{lem:filtered-observation}
Suppose a reduced filtered complex $C$ has highest Alexander grading
$a+1$, where its group is generated by a Maslov-zero cycle. Suppose
also that $H_{-1}(C)$ has a basis represented by cycles
$z_1,\ldots,z_d$ whose leading terms lie in Alexander grading $a$
and are linearly independent. Then no nonzero class in $H_{-1}(C)$
has a representative supported entirely below Alexander grading $a$.
\end{lemma}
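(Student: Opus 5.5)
The plan is to argue by contradiction, comparing Alexander-grading-$a$ components. Let $x$ denote the Maslov-zero cycle generating the group of $C$ in Alexander grading $a+1$. Suppose $y$ is a cycle of Maslov grading $-1$, supported entirely in Alexander gradings $<a$, whose class $[y]\in H_{-1}(C)$ is nonzero. Since the classes $[z_1],\ldots,[z_d]$ form a basis of $H_{-1}(C)$, we can write $[y]=\sum_i c_i[z_i]$ with $c_i\in\F$ not all zero. Because $\partial$ lowers the Maslov grading by one, there is then a chain $w$ of Maslov grading $0$ with
\[
y+\sum_{i=1}^d c_i z_i=\partial w .
\]

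First, I will compute the Alexander-grading-$a$ component of the left side. By hypothesis, each $z_i$ has leading term in grading $a$, so it is supported in gradings $\le a$ and has a well-defined grading-$a$ component $\ell_i$. The chain $y$ has no component in grading $a$. Hence the grading-$a$ component of the left side is $\sum_i c_i\ell_i$. This is nonzero, because the $\ell_i$ are linearly independent and not all $c_i$ vanish.

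Second, I will compute the same component of $\partial w$. Since the differential of the reduced complex strictly lowers Alexander grading, the grading-$a$ component of $\partial w$ comes only from the components of $w$ in gradings $>a$. As $a+1$ is the highest Alexander grading of $C$, this means only the component $w_{a+1}$ contributes. Now $w$ has Maslov grading $0$, and the group in grading $a+1$ is generated by the Maslov-zero element $x$, so $w_{a+1}$ is either $0$ or $x$. In both cases $\partial w_{a+1}=0$, because $x$ is a cycle. Therefore $\partial w$ has vanishing component in grading $a$. This contradicts the first computation, and the lemma follows.

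I do not expect a serious obstacle. The one point requiring care is the bookkeeping in the second computation: the grading-$a$ part of $\partial w$ is determined solely by $w_{a+1}$. This uses both that the differential strictly decreases Alexander grading, which holds because the model is reduced, and that nothing lies above grading $a+1$. The hypothesis that the top group is one-dimensional in Maslov grading zero and spanned by a cycle is exactly what kills this contribution.
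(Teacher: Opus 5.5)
Your proof is correct and follows the same route as the paper. You subtract a nonzero combination of the $z_i$ to get a boundary with nonzero grading-$a$ component, and then observe that such a component could only come from the Maslov-zero group in grading $a+1$, which is spanned by a cycle; you just spell out the bookkeeping that the paper's short argument leaves implicit.
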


\begin{proof}
If such a representative existed, its difference from a nonzero
linear combination of the $z_i$ would be a boundary with nonzero
Alexander-$a$ component. Since the differential strictly lowers
Alexander grading, this component could only come from a
Maslov-zero chain above grading $a$. The only such group is in
grading $a+1$, and its generator is a cycle. This is a contradiction.
\end{proof}

\subsection{A graded lower bound}

Theorem 4.1 of \cite{BD23} bounds the total rank of the next-to-top
term of a nontrivial fibered link. For a strongly quasipositive
fibered link, we retain the Maslov grading in their construction to
obtain Proposition~\ref{prop:graded-bound}, namely
\begin{equation}\label{eq:graded-bound}
\dim_{\F}\HFK_{-1}(L,g(L)-1)\ge |L|.
\end{equation}
This graded statement requires the following argument; it does not
follow from the total-rank bound alone.

\Needspace{9\baselineskip}
We use Alexander-multigraded link Floer homology $\wh{HFL}(L,\mathbf A)$ to
keep track of the individual components. In our integral Maslov
normalization, its relation to $\HFK$ is
\cite[Theorem 1.1]{OSlink}
\[
\HFK_M(L,a)\cong
\bigoplus_{A_1+\cdots+A_r=a}
\wh{HFL}_M(L,(A_1,\ldots,A_r)),\qquad r=|L|.
\]
Thus $\HFK$ collapses the multi-Alexander grading and retains the
Maslov grading.

\begin{proof}[Proof of Proposition~\ref{prop:graded-bound}]
Write $r=|L|$ and $g=g(L)$. The open book of a fibered strongly
quasipositive link supports the tight contact structure on $S^3$
\cite[Proposition 5.24]{HR22}, so its monodromy is right-veering
\cite[Theorem 1.1]{HKM07}. Apply the boundary-component
construction in the proof of \cite[Theorem 4.1]{BD23} to the
mirrored open book, whose monodromy is the inverse of the original
one. At each boundary component, choose a nonseparating arc as in
that proof. The fixed-arc alternative would give an
$S^1\times S^2$ summand in the ambient manifold and is therefore
excluded. The inverse monodromy sends the chosen arc to the left.
The disk-page case is the unknot, which is excluded by hypothesis.

Let $\mc H$ denote the open book Heegaard diagram obtained from
the mirrored open book, with the curve ordering that reverses
the ambient orientation. Thus $\mc H$ represents $L$, up to
simultaneous reversal of all component orientations.
Write $(\wh{CFL}(\mc H),\partial)$ for its unreduced hat complex,
equipped with the Alexander multifiltration. Name its basepoints
so that $\partial$ counts disks avoiding all $w$-basepoints,
lowers Maslov grading by one, and does not increase any coordinate
of the multi-Alexander grading.
Let $\partial_0$ be its Alexander-preserving part, and let
$\mathcal F_s\subset\wh{CFL}(\mc H)$ denote the subcomplex in
collapsed Alexander filtration at most $s$.

The distinguished generator $c$ represents the nonzero bottom
Alexander class. Write $\mathbf x=(x_1,\ldots,x_r)$ for its
multi-Alexander grading, so that $\sum_j x_j=-g$. In the integral
Maslov normalization, multigraded Alexander symmetry \cite[Section 8]{OSlink}
takes the form
\[
\wh{HFL}_M(L,\mathbf A)\cong
\wh{HFL}_{M-2\sum_{j=1}^r A_j}(L,-\mathbf A),
\]
where $\mathbf A=(A_1,\ldots,A_r)$. Since the top generator of $L$
has Maslov grading zero, this gives $M(c)=-2g$.

For the $i$th boundary component, the disk in the construction of
\cite[Lemma 4.3 and proof of Theorem 4.1]{BD23} crosses the
corresponding Alexander basepoint once. It supplies a generator
$d_i$ with
\[
\partial d_i=c,\qquad \partial_0d_i=0,\qquad
\mathbf A(d_i)=\mathbf x+\mathbf e_i,
\]
where $\mathbf e_i$ is the $i$th coordinate vector. Consequently,
\[
A(d_i)=1-g,\qquad M(d_i)=1-2g.
\]
The diagrams used for different boundary components may differ;
their absolute gradings are compared through link Floer homology,
and the unique bottom group fixes $\mathbf x$.

For completeness, the nonvanishing argument of
\cite[Lemma 4.4]{BD23} can be made in these two filtration levels.
If $d_i$ were an associated-graded boundary, choose a primitive
$e$ homogeneous in its multi-Alexander and Maslov gradings. Then
\[
\partial e=d_i+f,\qquad f\in\mathcal F_{-g}.
\]
Applying $\partial$ gives $\partial f=c$. But $c$ represents a
nonzero class in $H_*(\mathcal F_{-g})$: all associated-graded
homology below $-g$ vanishes, and $c$ generates the bottom group.
This is a contradiction. Hence each $d_i$ represents a nonzero
associated-graded homology class in Maslov grading $1-2g$.

Apply this multigraded symmetry within $L$ to these classes. Their
multi-Alexander gradings become $-\mathbf x-\mathbf e_i$, their
collapsed Alexander grading becomes $g-1$, and their Maslov
grading becomes
\[
(1-2g)-2(1-g)=-1.
\]
The $r$ multi-Alexander gradings are distinct, so the classes
remain linearly independent after collapsing the Alexander
grading.
\end{proof}

\section{Resolution at a preferred crossing}\label{sec:resolution}

We choose a crossing in a positive diagram of a prime fibered link
whose oriented smoothing is again prime and fibered, except in the
positive Hopf-link case. The crossing is incident to a leaf of the
reduced Seifert graph, as required for the surface reductions in
Section~\ref{sec:crossing-change}.

The \emph{Seifert graph} $G_D$ of an oriented diagram $D$ has one
vertex for each Seifert circle and one edge for each crossing. Its
\emph{reduced Seifert graph} $T_D$ replaces each collection of parallel
edges by a single edge. We call such a collection a \emph{crossing
bundle}, and its size the \emph{multiplicity} of the corresponding
edge of $T_D$.

By Stoimenow's characterization \cite[Proof of Corollary 1]{Stoimenow05},
a connected positive diagram represents a fibered link if and only if
$T_D$ is a tree; in this case its canonical Seifert surface is a fiber
surface. If the diagram is reduced, every edge of this tree has
multiplicity at least two, since a bundle of size one gives a bridge
in $G_D$ and hence a nugatory crossing. For a nontrivial prime fibered
positive link, we may choose such a reduced diagram, which is prime
by visual primeness of positive links \cite{Ozawa}.

Let $D$ be a connected reduced positive diagram with at least one
crossing and reduced Seifert tree $T$. Define its \emph{interlacement
graph} $\mc I_D$ to have vertex set $E(T)$. Two vertices
$e,f\in V(\mc I_D)$ are adjacent when their corresponding crossing
bundles meet a common Seifert circle and have crossings in the cyclic
order $e,f,e,f$ around that circle. We regard a graph with one vertex
as connected.

\begin{lemma}\label{lem:interlacement}
The diagram $D$ is prime if and only if $\mc I_D$ is connected.
\end{lemma}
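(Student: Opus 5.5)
We prove both directions by relating simple closed curves meeting the diagram in two points to the cyclic structure of bundles around Seifert circles. Recall that $D$ is \emph{prime} when every circle in the projection plane meeting $D$ transversely in two non-crossing points bounds a disk containing no crossings on one side. Since $T$ is a tree and $D$ is reduced, every bundle has at least two crossings, and each crossing of a bundle $e$ joins the two Seifert circles at the ends of $e$.

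For the direction ``$\mc I_D$ disconnected $\Rightarrow$ $D$ not prime'', take a partition $E(T)=E_1\sqcup E_2$ into nonempty sets with no interlacing pair across it. Interlacing is only defined at a common Seifert circle, so we first find a circle $s$ met by bundles from both classes. This exists because $T$ is connected: along a path in $T$ from an edge of $E_1$ to an edge of $E_2$, some vertex is incident to one edge of each class. Around $s$, the crossings of bundles from $E_1$ and $E_2$ are not interlaced pairwise.

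The main combinatorial step is that a family of chords on a circle whose interlacement graph splits into two classes admits a ``cut''. Concretely, there are two points of $s$ separating all crossings of some nonempty union of bundles from $E_1$ (or $E_2$) from all the others. This is the standard fact about circle graphs. Since no element of $E_1$ interlaces with $E_2$, the circular sequence of labels must contain a contiguous arc carrying only labels from a union of components, closed under the bundles it contains. I would prove this by taking a minimal arc of $s$ containing all crossings of a single bundle $e\in E_1$ together with everything forced into it by interlacement.

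Next we extend the cut. The two cut points on $s$ give a simple closed curve $\gamma$ crossing $D$ twice near $s$. On one side of $\gamma$ lie the cut crossings and, because $T$ is a tree, the entire subtrees of $T$ hanging off $s$ through those bundles. Both sides contain crossings, so $D$ is composite.

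For the converse, suppose $\gamma$ exhibits $D$ as a nontrivial diagrammatic connected sum. The two points of $\gamma\cap D$ lie on arcs of Seifert circles. Smoothing all crossings shows that $\gamma$ meets the Seifert circle system in two points on one circle $s$, or $\gamma$ avoids all circles. The latter is impossible because $D$ is connected and $\gamma$ separates crossings. So $\gamma$ cuts $s$ into two arcs, and every bundle lies entirely on one side of $\gamma$, since a bundle joins two circles, and a bundle with crossings on both sides would force extra intersections. A bundle on one side cannot interlace one on the other side: interlacement at $s$ would place crossings of both in alternating order on $s$, which the cut prevents. Interlacement at any other circle would involve bundles on a single side. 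This splits $E(T)$ into two nonempty classes with no edges between them, so $\mc I_D$ is disconnected.

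I expect the main obstacle to be the chord-diagram cut step in the first direction, namely ensuring the arc is genuinely closed under a union of bundles at $s$. I would handle this by choosing $e\in E_1$ at $s$ and taking the smallest arc containing all crossings of the $\mc I$-component of $e$ restricted to bundles at $s$. Non-interlacement with the remaining labels then forces those labels to lie in the complementary arc.
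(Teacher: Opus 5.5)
Your overall architecture matches the paper's proof. A cutting curve meets a single Seifert circle by parity. Every bundle then lies on one side of it. Conversely, a consecutive interval of bundle occurrences on one circle extends, by the tree property, to a two-point cut of $D$. Your proof that a nontrivial cut disconnects $\mc I_D$ is correct, and it does not even use the tree hypothesis. Your choice of a circle $s$ where bundles of both classes meet is a more direct substitute for the paper's last paragraph, which shows that $\mc I_D$ is connected if and only if every local interlacement graph is. The problem is the step you flagged as the main obstacle: the argument you propose for it fails.

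You take the component $C$ of $e$ among the bundles at $s$ and the smallest arc containing all its crossings. You then claim that non-interlacement forces every remaining label into the complementary arc. This is false. Take three bundles $e,f,g$ at $s$ in cyclic order $e,f,f,e,g,g$, for example three Hopf-band leaves on $s$, giving a diagram of $H_+\#H_+\#H_+$. The local graph has no edges and $C=\{e\}$. However, $f$ and $g$ lie in different arcs between the two $e$-crossings, so no arc containing the crossings of $C$ avoids all other labels.

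Two further points are missing. First, the fact that an outside label lies in a single gap of the whole occurrence set of $C$ needs the connectedness of $C$, not just non-interlacement with each member separately. Second, bundles of multiplicity $\ge3$ give point sets rather than chords, so the circle-graph fact does not apply verbatim.

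The paper handles both issues with convex hulls. Hulls of two labels meet exactly when the labels interlace. So the local components give disjoint connected unions of hulls, whose points form a noncrossing partition. With at least two blocks, an innermost block is a consecutive interval carrying exactly one component. That component lies in $E_1$ or $E_2$, so your parenthetical claim holds for this choice.

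You can also repair your own construction along these lines. The connected union of the hulls of $C$ cannot meet a chord joining two occurrences of an outside label, so each outside label lies in a single gap of $C$. Then take as the cut arc a gap of $C$ that contains some outside label, rather than the arc spanned by $C$.
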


\begin{proof}
A diagram is prime precisely when its projection graph has no
two-edge cut separating nonempty sets of crossing vertices. Each
Seifert circuit crosses a vertex partition an even number of times,
so both edges of such a cut lie on one Seifert circle $u$. Since $T$
is a tree, every component of $T-u$ lies on one side of the cut.
Thus the cut is equivalent to a proper consecutive interval around
$u$ containing all occurrences of a nonempty proper subset of its
incident bundles.

Such an interval exists exactly when the interlacement graph on the
bundles incident to $u$ is disconnected. Indeed, an interval separates
its labels from the others. Conversely, draw the convex hull of the
occurrences of each label on a circle. Two hulls intersect precisely
when the labels interlace. The connected components therefore give
disjoint connected unions of hulls, whose boundary points form a
noncrossing partition. If there is more than one component, an
innermost block is a consecutive interval of the required kind.

Finally, $\mc I_D$ is connected if and only if all these local graphs
are connected. To see this, note that a path leaving the bundles
incident to $u$ enters one component of $T-u$ and can return only
through the same bundle. Thus paths outside a local graph cannot
connect its distinct components, while connected local graphs join
all bundles because $T$ is connected. The result follows.
\end{proof}

\begin{lemma}\label{lem:prime-smoothing}
Let $D$ be a connected prime positive diagram with at least one
crossing, no nugatory crossings, and reduced Seifert graph a tree.
There is a crossing incident to a leaf whose oriented smoothing is
prime after removing at most one nugatory crossing, or is the unknot.
If the link represented by $D$ is not the positive Hopf link, the
smoothed link is nontrivial, prime, fibered, and positive.
\end{lemma}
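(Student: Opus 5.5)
The plan is to use Lemma~\ref{lem:interlacement} to locate a leaf bundle whose removal keeps the interlacement graph connected, and then to analyze the smoothing of one crossing in that bundle.

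\emph{Choice of crossing.} Since $D$ is prime, $\mc I_D$ is connected by Lemma~\ref{lem:interlacement}. Take a spanning tree of $\mc I_D$. I would like to pick a vertex $e$ that is a leaf of this spanning tree and whose edge in $T$ is incident to a leaf $v$ of $T$. Such a vertex need not exist for an arbitrary spanning tree. So I would first try to build the spanning tree so that some leaf-edge of $T$ is also one of its leaves. One approach is a breadth-first search in $\mc I_D$ rooted at an edge far from a chosen leaf of $T$. If that fails, I would argue directly on $T$. Take a leaf $v$ of $T$ with unique incident edge $e$ to the circle $u$. Inside $e$, crossings sit only on the segment of $u$ shared with $v$. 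Hence smoothing one crossing of $e$ lowers its multiplicity by one and does not change the cyclic pattern of labels around $u$, unless the multiplicity drops to $1$.

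\emph{Effect of smoothing.} Let $c$ be a crossing in the leaf bundle $e$, of multiplicity $m\ge2$. Its oriented smoothing $D_0$ keeps the same Seifert circles, and $G_{D_0}$ is $G_D$ with one edge of bundle $e$ deleted. If $m\ge3$, then $T_{D_0}=T$ and the interlacement pattern around $u$ is unchanged. Also $v$ is a leaf, so the circle $v$ meets only bundle $e$ and contributes no interlacements. Therefore $\mc I_{D_0}=\mc I_D$ is connected, $D_0$ is reduced, and it is prime by Lemma~\ref{lem:interlacement}. If $m=2$, the remaining crossing of $e$ is a bridge, hence nugatory. Removing it by a Reidemeister~I move merges $v$ into $u$. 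The result is a diagram whose reduced Seifert tree is $T-v$, and whose interlacement graph is $\mc I_D-e$ with the induced cyclic orders. This is exactly where $e$ must be a leaf of the spanning tree, so that $\mc I_D-e$ stays connected. I also have to check that deleting the two occurrences of $e$ from the circular word around $u$ creates no new noninterlacing that would split a local graph. A leaf of a spanning tree handles this, since the other vertices remain joined through paths avoiding $e$. If $T$ has the single edge $e$, then with $m=2$ the link is the positive Hopf link and the smoothing is the unknot. This gives the alternative in the statement.

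\emph{Conclusions for the link.} The smoothed diagram, after the possible nugatory removal, is positive, connected, has reduced Seifert graph a tree, and is prime. By Stoimenow's characterization it represents a fibered positive link. It is a nontrivial diagram without nugatory crossings, so it is nontrivial. Primeness of the link follows from Ozawa's visual primeness. If $D$ is not the Hopf link, then either $m\ge3$ or $T$ has another edge, so crossings remain.

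The main obstacle is the combinatorial choice: ensuring some bundle is simultaneously a leaf edge of $T$ and a non-cut vertex of $\mc I_D$ in the case $m=2$. If no leaf edge of $T$ has multiplicity at least three, I would argue the following. Non-leaf-edge bundles cannot all be separated by leaf edges, because a cut vertex $e$ of $\mc I_D$ interlaces with bundles on both sides around $u$. This property should be incompatible with $e$ being the only bundle at $v$ while the circular word at $u$ supplies alternatives. Failing this, I would pick a different leaf of $T$ via an extremal (outermost) argument on the noncrossing structure around $u$.
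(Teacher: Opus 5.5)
Your $m=2$ case, the one-edge case, and the closing appeal to Stoimenow and Ozawa agree with the paper. However, the proposal has two genuine gaps.

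\textbf{The choice of bundle is left unresolved.} The step you call the main obstacle is left open, and your remark that a leaf of an arbitrary spanning tree of $\mc I_D$ need not be a leaf edge of $T$ is false.
\begin{enumerate}[label=(\roman*)]
\item Adjacency in $\mc I_D$ requires two bundles to meet a common Seifert circle.
\item If $e$ were an interior edge of $T$, both components of $T-e$ would contain edges. Bundles in different components share no circle, so $\mc I_D-e$ would be disconnected.
\item Hence, when $T$ has at least two edges, every leaf of every spanning tree of $\mc I_D$ is automatically a leaf edge of $T$. The same holds for every non-cut vertex.
\end{enumerate}
No breadth-first search or extremal argument is needed. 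This is how the paper chooses $e$, and the choice is needed for every multiplicity, not only for $m=2$.

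\textbf{The $m\ge3$ step is false.} You claim that smoothing any crossing of a leaf bundle leaves $\mc I_D$ unchanged. This fails because the $e$-crossings do not lie on a single segment of $u$. In a prime diagram with at least two bundles, $e$ must interlace some other bundle at $u$, so its crossings are separated by other crossings around $u$. Deleting one occurrence of $e$ from the cyclic word at $u$ merges two of the arcs cut out by the $e$-crossings, which can destroy interlacements.

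Here is an example. Let $u$ lie inside $v$, with $e$-crossings at angles $0^\circ,120^\circ,240^\circ$. Inside $u$, place leaf circles $x,y,z$, each attached to $u$ by two crossings, at $\{60^\circ,180^\circ\}$, $\{200^\circ,300^\circ\}$ and $\{320^\circ,40^\circ\}$ respectively. Then $\mc I_D$ is a star centered at $e$, so $D$ is prime. But smoothing any one $e$-crossing isolates one of $x,y,z$, so the smoothed diagram is not prime by Lemma~\ref{lem:interlacement}. Thus for $m\ge3$ even the choice of bundle matters, not just the choice of crossing.

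The fix is the paper's argument:
\begin{enumerate}[label=(\roman*)]
\item Take $e$ to be a spanning-tree leaf, so that $\mc I_D-e$ is connected.
\item Choose a neighbor $f$ of $e$ in $\mc I_D$.
\item Keep two $e$-crossings witnessing the $e$--$f$ interlacement, and smooth a third.
\end{enumerate}
Removing occurrences of $e$ does not affect interlacement among the other bundles. So the new interlacement graph contains $\mc I_D-e$ together with the edge $ef$, and is therefore connected.
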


\begin{proof}
Write $T$ for the reduced Seifert tree. Suppose first that $T$ has
at least two edges. By Lemma~\ref{lem:interlacement}, $\mc I_D$ is
connected. Choose a leaf $e$ of a spanning tree of $\mc I_D$, so that
$\mc I_D-e$ remains connected. The edge $e$ must be a leaf edge of
$T$: otherwise both components of $T-e$ contain edges, and no
adjacency in $\mc I_D-e$ joins bundles on opposite sides.

Write $e=uv$, where $v$ is a leaf of $T$, and let $m\ge2$ be its
multiplicity. If $m=2$, smooth either crossing and remove the
remaining nugatory crossing at $v$. The resulting reduced Seifert
tree is $T-v$ and its interlacement graph is $\mc I_D-e$, so the
diagram is prime.

If $m\ge3$, choose a neighbor $f$ of $e$ in $\mc I_D$ and retain
two $e$-crossings witnessing their interlacement. Smooth any other
$e$-crossing. The new interlacement graph contains $\mc I_D-e$
together with the edge $ef$, and is therefore connected. Again the
smoothed diagram is prime by Lemma~\ref{lem:interlacement}.

If $T$ has only one edge, smoothing any crossing reduces its
multiplicity from $m$ to $m-1$. For $m\ge3$, the interlacement graph
still has one vertex, so the smoothed diagram is prime. For $m=2$,
the original link is the positive Hopf link and the smoothing is
the unknot.

In every case the resulting diagram is connected and positive with
reduced Seifert graph a tree, so its canonical surface $F_0$ is a
fiber surface. Smoothing removes one band, and removing the nugatory
crossing does not change the surface. Hence
\[
b_1(F_0)=\sum_{e\in E(T)}\bigl(m(e)-1\bigr)-1.
\]
Since every $m(e)\ge2$, this vanishes only when $T$ has one edge of
multiplicity two. Outside this Hopf-link case, the smoothed link is
therefore nontrivial and is prime by \cite{Ozawa}.
\end{proof}

\section{Top term after the crossing change}\label{sec:crossing-change}

We prove Lemma~\ref{lem:concentration} using the generalized Seifert
algorithm of Feller--Lewark--Lobb \cite{FLL}. The reduced Seifert tree
allows us to simplify their surface by isotopies and removal of
fibered positive Murasugi summands. These operations preserve the
Maslov-graded top term. The remaining surfaces are planar pretzel
surfaces, whose top terms we compute at the end of the section.
In particular, we do not use the incompressible-subsurface enlargement
in step~$(6')$ of their algorithm.

\subsection{The generalized surface and its graded top term}
\label{subsec:generalized-surface}
\label{subsec:top-preserving}

Before reducing the generalized surface, we establish its
connectedness and maximal Euler characteristic. These properties
locate the top Alexander grading and allow us to follow the graded
top term through the subsequent reductions.

Choose the crossing given by Lemma~\ref{lem:prime-smoothing}. Let $v$
be its leaf vertex, $u$ the adjacent vertex, and $q\ge2$ the number
of crossings between them. Change the chosen crossing to $c_-$ and
mark another $uv$-crossing as $c_+$. The diagram $D_-$ is of type II
in the terminology of \cite{FLL}. Let
\[
F_-=\Sigma'(D_-)
\]
be the generalized Seifert surface, with distinguished generalized
Seifert circles $s_1,s_2$. By the proof of \cite[Proposition 2.1]{FLL}, this
surface is quasipositive and hence has maximal Euler characteristic.
Throughout this section, crossing intervals in a generalized Seifert
diagram represent positive bands. The marked crossings $c_-,c_+$
are intersections of the projected boundaries of the distinguished
disks, and do not contribute additional bands.

Figure~\ref{fig:generalized-seifert-regions} shows the four regions
determined by $s_1,s_2$. The original Seifert circles $u,v$ are
the rounded boundaries of $O_1,O_2$, respectively.

\begin{figure}[htbp]
\centering
\includegraphics[width=.65\textwidth]{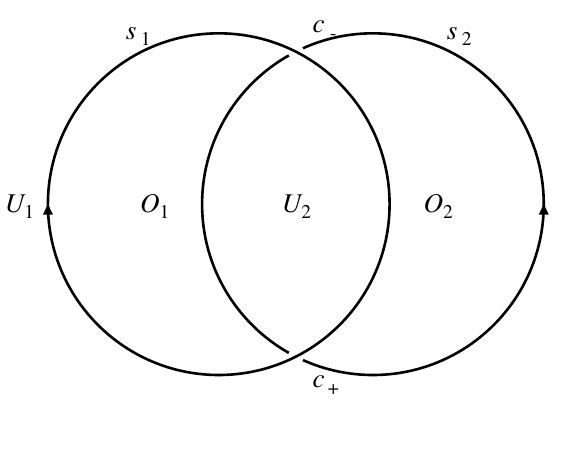}
\caption{The generalized circles and regions of \cite{FLL}.
The clockwise circle $s_1$ passes over the counterclockwise
circle $s_2$ at both marked crossings. The unbounded region is
$U_1$; oriented resolution recovers $u,v$ as the rounded
boundaries of $O_1,O_2$.}
\label{fig:generalized-seifert-regions}
\end{figure}

We first check connectedness. If $q\ge3$, an unmarked $uv$-crossing
provides a band between the distinguished disks. Every other branch
of the tree attaches to one or both of these disks, so $F_-$ is
connected. If $q=2$, the marked crossings divide $u$ into two arcs.
Any neighbor $w\ne v$ for which the bundles $uw$ and $uv$ interlace
has attachments on both arcs,
and its disk therefore connects the two distinguished disks after
generalized resolution. When the tree has at least two edges,
Lemma~\ref{lem:interlacement} says that $\mc I_D$ is connected
and has more than one vertex. The vertex corresponding to $uv$
therefore has a neighbor in $\mc I_D$; since $v$ is a leaf, that
neighbor corresponds to a bundle $uw$ of the required kind.
The only remaining case is a tree with two vertices and $q=2$,
which gives the positive Hopf link and is excluded.

Write $c,s$ for the crossing and Seifert-circle counts of the original
diagram. Its canonical fiber surface $F_+$ has Euler characteristic
$s-c$. The generalized algorithm uses $s$ disks and $c-2$ bands, so
\[
\chi(F_-)=\chi(F_+)+2.
\]
For a Seifert surface $F$, put
$i(F)=(|\partial F|-\chi(F))/2$. Crossing change preserves the
number of components, and hence
\begin{equation}\label{eq:changed-surface-index}
i(F_-)=g(L_+)-1.
\end{equation}
Write $a=g(L_+)-1$ for this surface index.
Maximal Euler characteristic and genus detection give
$g(L_-)=a$. Thus the crossing chosen for prime smoothing also gives
the required top Alexander grading for $L_-$.

For a Seifert surface $F$ of maximal Euler characteristic, define its
\emph{top term} by
\[
\mc T(F)=\HFK(\partial F,i(F)).
\]
We use the following form of the graded extremal Murasugi-sum
formula of Cheng--Hedden--Sarkar \cite[Theorem 1.1]{CHS22}.

\begin{proposition}\label{prop:top-murasugi}
Let $F=F_1*F_2$ be a Murasugi sum of Seifert surfaces. Then $F$
has maximal Euler characteristic among Seifert surfaces of its
boundary if and only if $F_1$ and $F_2$ each have maximal Euler
characteristic for their boundaries. In this case, the top term
$\mc T(F)$ satisfies
\[
\mc T(F)\cong\mc T(F_1)\otimes\mc T(F_2)
\]
as absolutely Maslov-graded vector spaces.
\end{proposition}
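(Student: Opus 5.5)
The plan is to derive this from the graded extremal Murasugi-sum theorem of Cheng--Hedden--Sarkar \cite[Theorem 1.1]{CHS22}, together with Gabai's theorem on Euler characteristic under Murasugi sum. I will also pin down the identification between the extremal Alexander grading and the surface index $i(F)$.

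First I will settle the Euler characteristic equivalence. Gabai's theorem gives this: a Murasugi sum $F_1*F_2$ is of maximal Euler characteristic (minimal genus for links, in the sense of $\chi$ for possibly disconnected surfaces) if and only if both summands are. Connected Seifert surfaces are the relevant case here, and Gabai's result is stated for them. For the characteristic count I will use $\chi(F)=\chi(F_1)+\chi(F_2)-1$, since the summing disk is counted twice. Boundary components are related by $|\partial F|=|\partial F_1|+|\partial F_2|-1$ when computed in the graded setting, or more safely through the index additivity $i(F)=i(F_1)+i(F_2)$. I will verify index additivity directly from the definition of $i$ and the local model of the Murasugi sum, rather than through component counts alone. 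Once maximality holds, the standard genus detection of \cite{OSlink,Nifp2}, with $i(F)$ equal to the top Alexander grading, shows that $\mc T(F)$, $\mc T(F_1)$ and $\mc T(F_2)$ are the top Alexander groups.

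Next I will invoke \cite[Theorem 1.1]{CHS22}. It states that the Alexander grading $i(F)$ part of $\HFK(\partial F)$ is isomorphic to $\HFK(\partial F_1,i(F_1))\otimes\HFK(\partial F_2,i(F_2))$ as Maslov-graded groups, possibly up to a shift. The main obstacle is normalization: their theorem may be stated with a Maslov shift or in the $\mathbb{F}[U]$ or knotification conventions. I will check that in the integer normalization of Section~\ref{sec:knot-floer} the shift is zero. The test case is both summands being Hopf bands: the top terms are each $\F[0]$, and the plumbing is the positive trefoil or $T(2,4)$, with top term $\F[0]$. An affine shift determined by universal data, such as the component counts, would have to vanish there. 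More structurally, I will compare the collapsing formula from \cite{OSlink} with the multi-graded symmetry, and note that the CHS isomorphism is induced by a cobordism-type map whose grading shift depends only on $i$ and $|\partial F|$, both of which are additive. Finally, I will confirm that the tensor product grading convention matches the absolute gradings: the Maslov grading of $x\otimes y$ is the sum of the gradings of $x$ and $y$.
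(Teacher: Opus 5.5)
You use the same ingredients as the paper: Gabai for the Euler characteristic and \cite[Theorem 1.1]{CHS22} for the isomorphism. However, the one step with real content, showing that the Maslov shift cancels, rests on a false premise. Neither $|\partial F|$ nor $i(F)$ is additive under Murasugi sum. Plumbing two positive Hopf bands (each with $|\partial|=2$, $\chi=0$, $i=1$) gives the fiber surface of the positive trefoil, which has $|\partial F|=1$ and $i(F)=1\neq 1+1$. (It is the trefoil, not $T(2,4)$; the latter needs three bands.) The relation $|\partial F|=|\partial F_1|+|\partial F_2|-1$ holds for boundary connected sum. It fails for plumbings, and plumbings are exactly the Hopf removals to which the paper later applies this proposition. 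Consequences for your plan:
\begin{itemize}
\item Your planned verification of index additivity from the local model cannot succeed.
\item Your structural claim that a shift depending only on $i$ and $|\partial F|$ must vanish has no basis.
\item A single Hopf-band example cannot rule out a nonzero shift in general unless you already know the shift's precise form.
\end{itemize}

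The missing idea is the correct grading bookkeeping. \cite[Theorem 1.1]{CHS22} is a bottom-term formula whose Maslov shifts depend on the boundary counts; in the paper's integer normalization these shifts are $|\partial F|-1$ and $|\partial F_j|-1$. To pass to top terms you must apply Alexander symmetry, $\HFK(\partial F,-i(F))\cong\mc T(F)[-2i(F)]$, which contributes the non-additive shift $-2i(F)$. The two shifts combine to $|\partial F|-1-2i(F)=\chi(F)-1$, and only this combination is additive. Since $\chi(F)=\chi(F_1)+\chi(F_2)-1$, we get $\chi(F)-1=(\chi(F_1)-1)+(\chi(F_2)-1)$. So the shifts on the two sides agree and cancel, which gives the isomorphism in absolute Maslov grading. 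The Alexander gradings need not match, consistent with the failure of additivity of $i$.
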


\begin{proof}
The assertion about Euler characteristic follows from Gabai's theorem
\cite{Gabai}. By Alexander symmetry \cite[Section 8]{OSlink},
\[
\HFK(\partial F,-i(F))\cong\mc T(F)[-2i(F)].
\]
Since $|\partial F|-1-2i(F)=\chi(F)-1$, the graded
bottom-term formula of \cite[Theorem 1.1]{CHS22} becomes
\[
\mc T(F)[\chi(F)-1]\cong
\mc T(F_1)[\chi(F_1)-1]\otimes
\mc T(F_2)[\chi(F_2)-1].
\]
Since $\chi(F)=\chi(F_1)+\chi(F_2)-1$, the Maslov shifts agree;
canceling them gives the claimed isomorphism.
\end{proof}

A positive Hopf band, or a fiber surface of any fibered positive
link, has top term $\F[0]$. By Proposition~\ref{prop:top-murasugi},
removing such a summand therefore preserves $\mc T$. Maximal Euler
characteristic is preserved by isotopy and passes to Murasugi
summands by the same proposition.

We use three forms of these operations:
\begin{enumerate}[label=(\roman*)]
\item Surface isotopies, including the crossing swaps in
Figure~\ref{fig:generalized-seifert-moves}(a), the disk--crossing
exchanges in Figure~\ref{fig:generalized-seifert-moves}(b), and
cancellation of a terminal disk with its
single incident band.
\item Murasugi splitting along an ordinary Seifert circle, as in
step~$(4')$ of \cite{FLL}. Retain the summand containing the marked
pair and remove positive factors whose reduced Seifert graphs are
trees; these factors are fibered by \cite{Stoimenow05}.
\item Deletion of one of two positive crossing intervals that are
\emph{next to each other} in the sense of
\cite[Definition 2.3]{FLL}, using \cite[Lemma 2.7]{FLL}.
\end{enumerate}

This notion differs from coherent adjacency, which requires the
crossings to be consecutive on both participating Seifert circles.
On a generalized Seifert circle
$k$, two crossing intervals are next to each other if they lie on the
same side of $k$ and their endpoints bound an arc containing neither
$c_-$ nor $c_+$ and no other attachment on that side. Attachments
on the opposite side of $k$ are allowed. For intervals joining the
same two circles, Feller--Lewark--Lobb require such arcs on both
circles, with the two intervals and the two arcs bounding a region
containing no generalized Seifert circles or crossing intervals.
They also allow this condition to hold after a crossing swap over
$c_-$ or $c_+$. Thus the relevant side and intervening region must
be empty, but the arcs need not be free of attachments on both sides.
\begin{figure}[htbp]
\centering
\includegraphics[width=.70\textwidth]{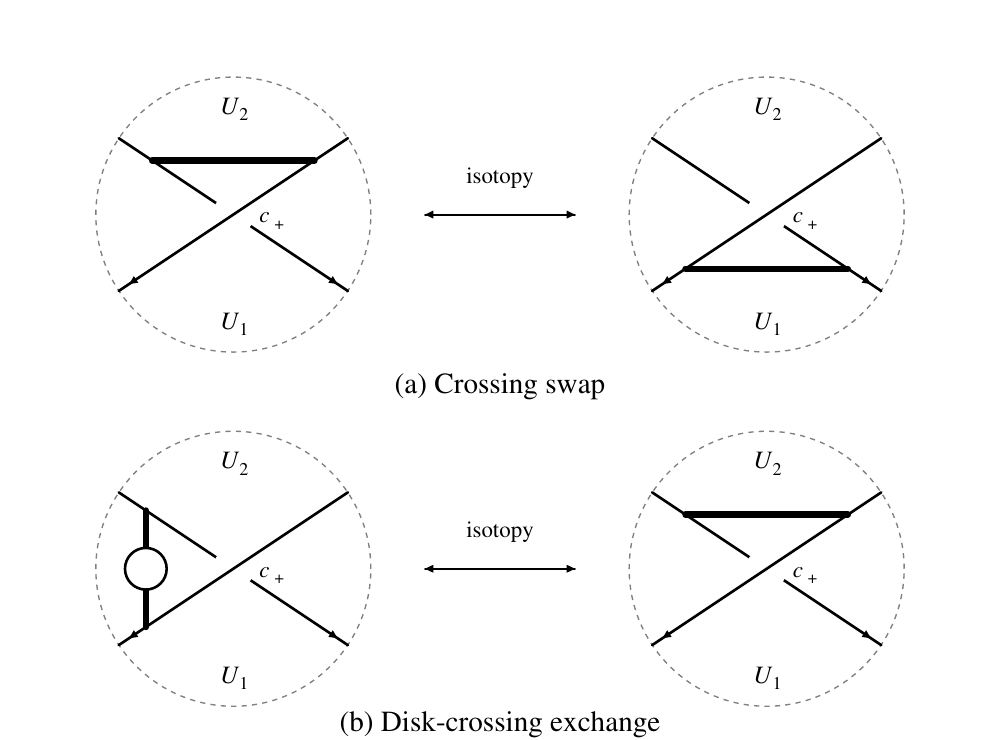}
\caption{Reversible local isotopies, adapted from \cite{FLL}:
(a) swapping a direct band across $c_+$; (b) exchanging an ordinary
disk and its two bands for a direct $U_2$-band. Thin curves bound
disks; thick intervals are positive bands. Dashed circles enclose
the moves and no other circles or crossing intervals; the rest
of the diagram is fixed.}
\label{fig:generalized-seifert-moves}
\end{figure}

For the third operation, let $D$ be the current generalized
Seifert diagram and let $D'$ be obtained by deleting the crossing
chosen in \cite[Lemma 2.7]{FLL}. Write $F=\Sigma'(D)$ and
$F'=\Sigma'(D')$ for the surfaces before and after deletion.
If the two circles are not nested, the proof of that lemma realizes
$F$ as a positive Hopf plumbing of $F'$.

In the nested-circle case, split along the ordinary circle $k$
used in that proof. Let $B$ be the generalized Seifert surface of
the piece containing the marked pair and the two crossing intervals,
and let $A$ be the Seifert surface of the complementary positive
piece. Let $B'$ be the surface obtained from $B$ by deleting the
chosen crossing interval. The common summand $A$ is unchanged, and
\[
F=A*B,\qquad F'=A*B',\qquad B=B'*\mathcal H_+,
\]
where $\mathcal H_+$ is a positive Hopf band. Applying
Proposition~\ref{prop:top-murasugi} to these decompositions gives
$\mc T(F)\cong\mc T(F')$, without requiring $A$ to be fibered.
We can therefore use all cases of \cite[Lemma 2.7]{FLL} while
preserving the graded top term.

\Needspace{14\baselineskip}
\subsection{Reduction of the ordinary branches}

Use the regions $O_1,O_2,U_1,U_2$ of
Figure~\ref{fig:generalized-seifert-regions}, labeled
so that the original circles $u,v$ are the rounded boundaries of
$O_1,O_2$, respectively. Each rounded boundary contains portions
of both $s_1$ and $s_2$. Thus an original Seifert circle is distinct
from either distinguished generalized circle.

\begin{lemma}\label{lem:bridge-form}
The generalized surface reduces by the operations of
Section~\ref{subsec:top-preserving} to a diagram with the following
properties:
\begin{enumerate}[label=(\roman*)]
\item $O_2$ contains no ordinary circles or crossing intervals.
\item $U_1,U_2$ contain no ordinary circles and each contains at
most one direct band between $s_1,s_2$.
\item $O_1$ contains finitely many pairwise non-nested ordinary
circles, each joined by exactly one band to each distinguished
circle. No bands join different ordinary circles.
\end{enumerate}
The retained surface is connected and has the same absolutely
Maslov-graded top term $\mc T$.
\end{lemma}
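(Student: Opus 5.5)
We will carry out the reductions region by region, using the reduced Seifert tree $T$ of the original diagram to organize the ordinary circles. All moves are of types (i)--(iii) of Section~\ref{subsec:top-preserving}, so $\mc T$ is preserved at each step. Connectedness is preserved as well: isotopies do not change the surface; Murasugi splitting retains a summand containing the marked pair, which stays attached to both distinguished disks; and deletion under \cite[Lemma 2.7]{FLL} removes one of two parallel intervals, leaving the other.

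\textbf{Step 1: clearing $O_2$.} The circle $v$ is a leaf of $T$, so every crossing at $v$ is a $uv$-crossing. Ordinary circles inside $O_2$ therefore come only from nested branches of the tree hanging off $v$'s side of the picture. Each such branch meets the rest of the surface through a single ordinary circle. We split it off as a Murasugi summand by operation (ii). Its reduced Seifert graph is a subtree of $T$, so the summand is a fibered positive surface and can be removed. The unmarked $uv$-intervals lying in $O_2$ join $s_1$ to $s_2$. We move them into $U_1$ or $U_2$ by the crossing swaps of Figure~\ref{fig:generalized-seifert-moves}(a). Any two that become parallel are next to each other, and one of them is deleted by (iii).

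\textbf{Step 2: simplifying $U_1,U_2$ and $O_1$.} In $U_1,U_2$ we again split off every ordinary branch by (ii), since each such branch is a subtree of $T$ attached along one circle. Among the direct bands between $s_1$ and $s_2$ in each region, we delete all but one by (iii); they are successively next to each other once the intervening region is empty.

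In $O_1$, every ordinary circle $w$ adjacent to $u$ in $T$ now attaches to the arcs of $s_1$ and $s_2$ coming from $u$. The first task is to remove the subtree of $T$ beyond $w$, again by (ii). After that, $w$ carries only bands to $s_1$ and $s_2$.

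Several bands from $w$ to the same $s_i$ that are consecutive along $w$ are next to each other, so (iii) reduces them to one. Non-consecutive bands must first be made consecutive. We do this with the swaps of Figure~\ref{fig:generalized-seifert-moves}(a) across the marked crossings, which let an interval pass from one distinguished arc to the other. If $w$ ends up joined to only one distinguished circle, it is a terminal disk after this reduction. We cancel it with its single band by (i), or convert it by Figure~\ref{fig:generalized-seifert-moves}(b) into a direct band, which Step~2 then absorbs.

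Nested ordinary circles in $O_1$ cannot occur once all branches are removed: a nested circle would be a vertex of $T$ not adjacent to $u$. Bands between different ordinary circles are likewise excluded, since such a band is an edge of $T$ not incident to $u$, and every such edge was removed in a split summand.

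\textbf{Main obstacle.} The delicate step is reducing each $w$ in $O_1$ to exactly one band to each of $s_1$ and $s_2$. The difficulty is that the attachments of different $w$'s along $u$ may interlace, exactly as recorded in $\mc I_D$, so bands from $w$ to $s_1$ need not be next to each other: attachments of another circle $w'$ may lie on the same side between them.

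The first approach I would try is an induction on the total number of $w$-bands. At each stage, pick an innermost block of the noncrossing-partition picture from the proof of Lemma~\ref{lem:interlacement}. Within such a block, two same-side intervals of one circle have no same-side attachments between them, possibly after swapping across $c_\pm$ as permitted in \cite[Definition 2.3]{FLL}. We then apply (iii) to reduce, and repeat.

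If interlacing blocks this argument, the fallback is to regard the remaining configuration as a Murasugi sum along $w$ and to check directly which factor is a positive Hopf band.
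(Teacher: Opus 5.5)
\textbf{There is a genuine gap, and it lies in your use of operation (ii).} A Murasugi splitting along an ordinary Seifert circle $k$ removes only material on the far side of $k$, that is, material nested inside $k$ away from the marked pair. It cannot remove a root in $U_1$ or $U_2$, or a one-sided root in $O_1$. Such a root is attached to an arc of $s_1$ or $s_2$, and $u$ is no longer an ordinary circle of $\Sigma'(D_-)$. So ``a subtree attached along one circle'' is not a splitting of type (ii) there.

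Nor does splitting along $w$ remove descendants of a root $w\subset O_1$ that sit beside $w$ in $O_1$ rather than inside it. For example, a terminal disk outside $w$ with several bands to $w$ is not cut off by any ordinary circle. Both situations occur in general. The paper handles them without splitting:
\begin{itemize}
\item After (ii) has removed nested material, a terminal disk is pruned in three stages. First empty, by induction, the region $R$ between two consecutive bands to its parent. Then delete one of these bands via \cite[Lemma 2.7]{FLL}. Finally cancel the last disk--band pair.
\item One-sided roots are removed the same way, working in an innermost cap between two successive bands to one boundary portion. This is needed because such a cap may contain other roots attached to that portion.
\end{itemize}

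\textbf{You have also placed the obstacle in the wrong spot.} Roots in $O_1$ all lie on the same side of $u$, and planarity prevents their attachments from interlacing. Interlacement at $u$ in $\mc I_D$ occurs only between bundles on opposite sides of $u$, such as $uv$ and a bundle to a root in $O_1$. Such opposite-side attachments are harmless, since \cite[Definition 2.3]{FLL} only asks the chosen side to be empty.

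Similarly, the bands from one root to a fixed boundary portion automatically form a contiguous block along that root, so nothing needs to be made consecutive. In any case, the swaps of Figure~\ref{fig:generalized-seifert-moves}(a) move direct $s_1s_2$-bands across $c_+$ and do not act on bands from an ordinary circle.

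What must actually be checked is that the cap cut off by two successive bands of a two-sided root is empty. This holds once descendants and all one-sided roots are gone: every remaining root meets both portions, so none can lie in the cap. Your innermost-block idea points in this direction, but as written the key reduction is deferred to a fallback rather than proved.

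(Step~1 needs no moves. The unmarked $uv$-crossings are direct bands in $U_1\cup U_2$, not in $O_2$. Since $v$ is a leaf, an interval in $O_2$ would be a crossing of $v$ with itself, so connectedness already empties $O_2$.)
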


\begin{proof}
Since $v$ is a leaf, no ordinary branch attaches to the
$v$-boundary of $O_2$. A crossing interval in $O_2$ would have
both endpoints on the original circle $v$, which cannot occur in
a Seifert graph. Connectedness therefore makes $O_2$ empty.

We carry out the remaining reductions relative to the boundary
arcs of each region, preserving the order of the attachments that
remain. Regard the projection as lying on a sphere, so that the
closure of each region is a disk; this also treats $U_1$ without
a special choice of infinity. Delete $u$ from the original reduced
Seifert tree. Each remaining component other than $v$ has a unique
root adjacent to $u$, since two such roots in one component would
give a cycle. No band joins distinct rooted components. In a
$U_i$-region, every ordinary component attaches only to the boundary
portion belonging to $u$, since $v$ has no other neighbor. In
$O_1$, a root may attach to one or both boundary portions.

\smallskip
\noindent\emph{Step 1: Remove nesting.}
If an ordinary circle encloses a nonempty subdiagram on the side
away from the region boundary, split along the circle and retain
the summand containing the marked pair. The removed summand is
the canonical Seifert surface of a positive diagram whose reduced
Seifert graph is a subtree. It is a fiber surface by
\cite{Stoimenow05} and has top term $\F[0]$.
Proposition~\ref{prop:top-murasugi} therefore implies that removing it
preserves $\mc T$. Repeating this operation leaves each ordinary
circle bounding a disk with no Seifert circles or crossing intervals
in its interior. In particular, an
independently attached root cannot lie inside another retained root.

\smallskip
\noindent\emph{Step 2: Prune descendants.}
Retain the roots and their boundary attachments. Let $k$ be a
terminal ordinary disk other than a retained root. All its bands
run to its parent disk. If there is one band, cancel the
disk--band pair. If there are several, choose two consecutive
bands at $k$. Together with arcs of the two disk boundaries,
they bound a region $R$ on the side away from the fixed region
boundary. Step~1 empties the interiors of individual ordinary
circles. The region $R$ lies outside both disks and may still
contain a nonempty subdiagram. Any such subdiagram must attach
through the parent disk: the terminal disk has no other neighbor,
and an attachment escaping $R$ would cross one of its boundary
bands. See Figure~\ref{fig:descendant-region}.

\begin{figure}[htbp]
\centering
\includegraphics[width=.70\textwidth]{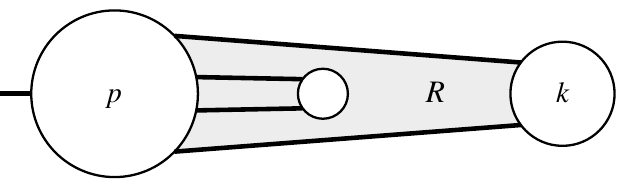}
\caption{A branch in $R$ attached to the parent $p$ of the terminal
disk $k$. All circle interiors are empty.}
\label{fig:descendant-region}
\end{figure}

Prune the subdiagram in $R$ first, by induction on the number of
disks and bands it contains. A terminal disk with one band cancels,
and any smaller region between parallel bands is treated before
$R$. The parent boundary is fixed throughout. Once $R$ is empty,
the two bands are
adjacent in the sense of \cite[Definition 2.3]{FLL}. Delete the
band permitted by \cite[Lemma 2.7]{FLL}. Repeat until $k$ has
only one incident band and can be canceled. Thus all descendants
of the roots can be removed. These moves strictly decrease the
total number of disks and bands.

\smallskip
\noindent\emph{Step 3: Remove one-sided roots.}
Consider roots attached to only one boundary portion. Two
successive bands from such a root to that portion cut off a cap
avoiding the other boundary portion and the marked crossings.
Unlike a region between two ordinary disks, this cap may contain
other roots attached to the same boundary portion; their presence
does not create a cycle in the original tree. Choose an innermost
cap and prune its enclosed subdiagram first, using the same
induction relative to its boundary. A root meeting both boundary
portions cannot lie in the cap, since it would have to cross its
boundary to reach the other portion. After the cap is empty,
\cite[Lemma 2.7]{FLL} reduces the number of parallel bands. A
root left with one band cancels with that band. This removes all
one-sided roots, including every ordinary root in $U_1,U_2$.
See Figure~\ref{fig:cap-pruning} for the occupied-cap case.

\begin{figure}[htbp]
\centering
\includegraphics[width=\textwidth]{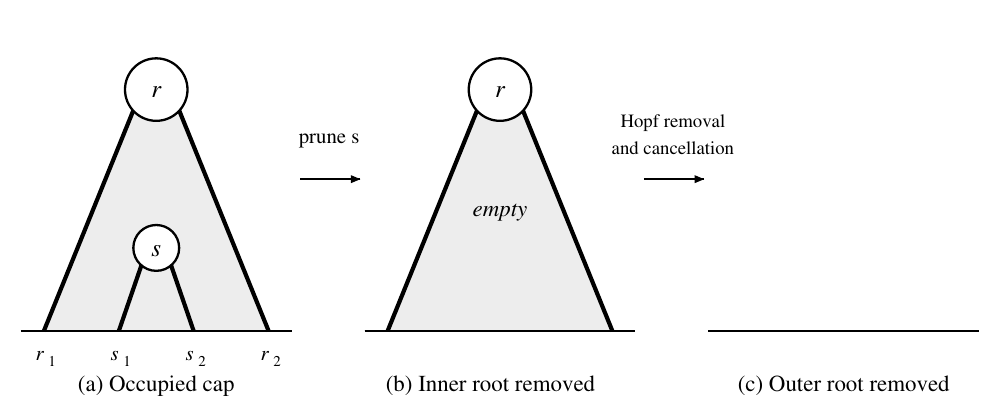}
\caption{Cap pruning: remove the inner root $s$ to obtain (b),
then the outer root $r$ to obtain (c), each by Hopf removal and
disk--band cancellation. Thin curves bound disks; thick intervals
are positive bands with twists suppressed. Subscripts label bands.
The shaded cap is complementary to the surface.}
\label{fig:cap-pruning}
\end{figure}

\smallskip
\noindent\emph{Step 4: Reduce the remaining bands.}
The roots remaining in $O_1$ meet both boundary portions. At any
such root, the bands to a fixed boundary portion form a contiguous
block: alternating blocks to the two portions would force the
bands to intersect in the planar region. Two successive bands in
one block cut off a cap avoiding the opposite boundary portion.
No other remaining root can lie in this cap, since every remaining
root meets both portions, and all one-sided material has already
been removed. The cap is therefore empty. Repeated applications
of \cite[Lemma 2.7]{FLL} leave exactly one band to each boundary
portion. There are no bands between the resulting roots.

Finally, with all ordinary circles removed from $U_1,U_2$,
consecutive direct bands in either region bound empty strips.
Hopf removal leaves at most one direct band in each region. In
these adjacency checks, attachments on the opposite side of a
boundary arc are allowed by \cite[Definition 2.3]{FLL}; only the
chosen side and the region between the two bands must be empty.

\smallskip
All operations preserve $\mc T$ by
Proposition~\ref{prop:top-murasugi} and the isotopies described above.
They also preserve
connectedness of the retained surface: deleting one of a parallel
pair leaves the other band joining the same disks, a terminal
disk--band cancellation is an isotopy, and each Murasugi splitting
retains the connected summand containing the marked pair. This
proves all three assertions.
\end{proof}

We call the resulting configuration \emph{bridge form}.
Figure~\ref{fig:pretzel-absorption}(a) shows the special case with
three bridges and $U_1,U_2$ empty. Shrinking
each ordinary disk and its two bands to an arc across $O_1$ shows
that the bridges have the same linear order along its two boundary
portions, from $c_-$ to $c_+$. Endpoints of direct $U_i$-bands may
interleave with bridge endpoints; we retain this order below.

\Needspace{10\baselineskip}
\subsection{Reduction to pretzel surfaces and their top terms}

\begin{lemma}\label{lem:pretzel-reduction}
Every connected surface in bridge form reduces, preserving
$\mc T$, to the standard planar surface $F_m$ for
\begin{equation}\label{eq:pretzel-family}
P_m=P(\underbrace{-2,\ldots,-2}_{m\text{ entries}}),\qquad m\ge1,
\end{equation}
with the surface-induced orientation and the pretzel-sign convention
of \cite{FLL}. The surface $F_1$ is a disk.
\end{lemma}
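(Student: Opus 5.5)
The reduction has two stages. First I absorb the direct $U_1$- and $U_2$-bands by isotopy. Then I identify the remaining bridge configuration with a planar pretzel surface. Every move is either an isotopy of type (i) or a Hopf deletion of type (iii) from Section~\ref{subsec:top-preserving}, so $\mc T$ is preserved by Proposition~\ref{prop:top-murasugi} and connectedness is preserved as in Lemma~\ref{lem:bridge-form}. I will induct on the number of bands plus disks.

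\emph{Step 1: absorbing direct bands.} Suppose $U_2$ contains a direct band between $s_1$ and $s_2$. I will use the crossing swap of Figure~\ref{fig:generalized-seifert-moves}(a) to move it across $c_+$, or across $c_-$, into $O_1$. Read in reverse, the disk--crossing exchange of Figure~\ref{fig:generalized-seifert-moves}(b) turns such a band into an ordinary disk joined by one band to each distinguished circle, i.e.\ a new bridge. The same procedure handles a direct band in $U_1$ after a swap across the other marked crossing. The interleaving of direct-band endpoints with bridge endpoints is recorded so that the swap lands the new bridge in the linear order from $c_-$ to $c_+$ without creating crossings. After this step the diagram consists only of the two distinguished circles and $m-1\ge0$ bridges in $O_1$, with the same linear order along both boundary portions.

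\emph{Step 2: recognizing the pretzel surface.} The generalized Seifert surface has two disks for $s_1,s_2$ and one disk per bridge. I view $s_1\cup s_2$, glued along the marked crossings, as two disks overlapping in a clasp. Each bridge is a disk with two positive half-twisted bands, so it contributes a twisted strip. Together with the two strips at $c_\pm$, these strips connect the two ends of the picture in parallel. This should give the standard planar surface of the pretzel link with $m$ strands, where each strand carries two half-twists of a fixed sign. With the surface-induced orientation and the sign convention of \cite{FLL}, this is $P(-2,\ldots,-2)$.

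I would check the identification with an Euler characteristic count: there are $2+(m-1)$ disks and $2(m-1)$ bands, so $\chi=3-m$, which matches $\chi(F_m)=2-(m-1)$ for the planar surface with $m$ strands. For $m=1$ there are no bridges and no direct bands. The surface is then the two distinguished disks joined along the clasp, and the generalized construction resolves this to a single disk; this is the assertion that $F_1$ is a disk.

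\emph{Main obstacle.} The hardest step is Step 1 when direct bands in $U_1$ and $U_2$ are interleaved with bridge endpoints. The swap must be compatible with the ordering, and absorbing a band must not create a bridge nested with or crossing existing bridges. I would handle this by first showing that any two consecutive attachments (bridge, direct band) can be exchanged by a local isotopy that commutes a band endpoint past a bridge disk. This lets me slide every direct band adjacent to a marked crossing before swapping. If a $U_1$-band and a $U_2$-band are both present and next to each other after the swap, I would delete one of them by \cite[Lemma 2.7]{FLL}, which gives a further reduction of the count.

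A second delicate point is the sign bookkeeping in Step 2. The bands are positive in the diagram convention, but with \cite{FLL}'s pretzel conventions they should read as $-2$ entries. I would verify this on the case $m=2$: the result should be the positive Hopf band, whose boundary is the positive Hopf link $P(-2,-2)$ in that convention.
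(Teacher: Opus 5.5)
There is a genuine gap, and it affects both steps.

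\textbf{Step 2: the identification is off by one.} It rests on a wrong picture of the marked crossings. They are crossings of the boundaries of the two distinguished disks, and $s_1$ passes over $s_2$ at both. So these disks are disjoint, and $c_\pm$ contribute neither strips nor a clasp. Consequently $n$ bridges with no direct bands give $F_n$, not $F_{n+1}$. Your own count $\chi=3-m$ for $m-1$ bridges is $\chi(F_{m-1})$, because the planar surface with $m$ boundary components has $\chi(F_m)=2-m$. The configuration you call $F_1$, with no bridges and no direct bands, is two disjoint disks bounding a split two-component unlink. That is exactly what the connectedness hypothesis must exclude, and your argument never uses that hypothesis. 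The disk $F_1$ is the one-bridge surface. Your $m=2$ check fails for the same reason: $P(-2,-2)$ is the four-crossing $(2,4)$ torus link with $\mc T(F_2)\cong\F^{2}[0]$, whereas the positive Hopf band has top term $\F[0]$.

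\textbf{Step 1: it cannot be done by isotopy.} This is the more serious problem. The outcome depends on the order of attachments, and Hopf deletions are forced. (Also, the swap of Figure~\ref{fig:generalized-seifert-moves}(a) moves a direct band between $U_1$ and $U_2$, not into $O_1$.) Two examples show this.
\begin{enumerate}[label=(\roman*)]
\item Take $D=\widehat{\sigma_1^4}$ with $c_-,c_+$ the first and third crossings. Bridge form has $n=0$, $e=2$, and $L_-=\widehat{\sigma_1^{2}}=H_+$, so $\mc T\cong\F[0]$. Turning both direct bands into bridges would produce $F_2$, whose boundary is not $H_+$.
\item Take the $3$-braid words $\sigma_1^{-1}\sigma_0\sigma_1\sigma_0\sigma_1$ and $\sigma_1^{-1}\sigma_0\sigma_1^{2}\sigma_0$, with $c_-,c_+$ the first and third letters. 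Both give one bridge and one direct band, differing only in the order of the last two attachments along one boundary arc of $O_1$. The first gives $L_-=H_+$. The second gives an $L_-$ bounding an unknotted annulus whose components have linking number $2$, so $\mc T\cong\F^{2}[0]$.
\end{enumerate}
So no isotopy commutes a direct-band endpoint past a bridge endpoint. There is also no analogue at $c_-$ of the exchange in Figure~\ref{fig:generalized-seifert-moves}(b): in the first word of (ii) the direct band is adjacent to $c_-$, yet converting it to a bridge would give the wrong answer.

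\textbf{The missing idea} is an accessibility dichotomy at $c_+$ alone. Either some direct-band endpoint lies beyond the last bridge endpoint on its arc, or the last bridge is accessible.
\begin{enumerate}[label=(\alph*)]
\item If the last bridge is accessible, exchange it for a direct band next to $c_+$. Swap that band into the region of the existing direct band if necessary. Then delete a positive Hopf band by \cite[Lemma 2.7]{FLL}, which lowers $n+e$.
\item If a direct band is accessible and $e=2$, swap it and delete.
\item Only when $e=1$ do you reverse the exchange to create a bridge. This ends in the case $e=0$, where connectedness gives $n\ge1$ and the surface is $F_n$.
\end{enumerate}
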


\begin{proof}
Let $n$ be the number of bridge disks and $e\in\{0,1,2\}$ the
number of direct bands.

\smallskip
\noindent\emph{Case 1: No direct bands ($e=0$).}
Connectedness gives $n\ge1$.
Absorb each ordinary disk into its two bands. The induced framing
gives one $(-2)$-strip for each bridge, and the common order of the
bridges along the two boundary portions gives the standard planar
two-disk pretzel surface $F_n$. For three bridges this is the
identification obtained from $D_5$ in \cite[Lemma 2.6]{FLL}; the
same local absorption, with the same framing, applies for any
number of bridges.
Figure~\ref{fig:pretzel-absorption} depicts the three-bridge case.

\begin{figure}[htbp]
\centering
\includegraphics[width=\textwidth]{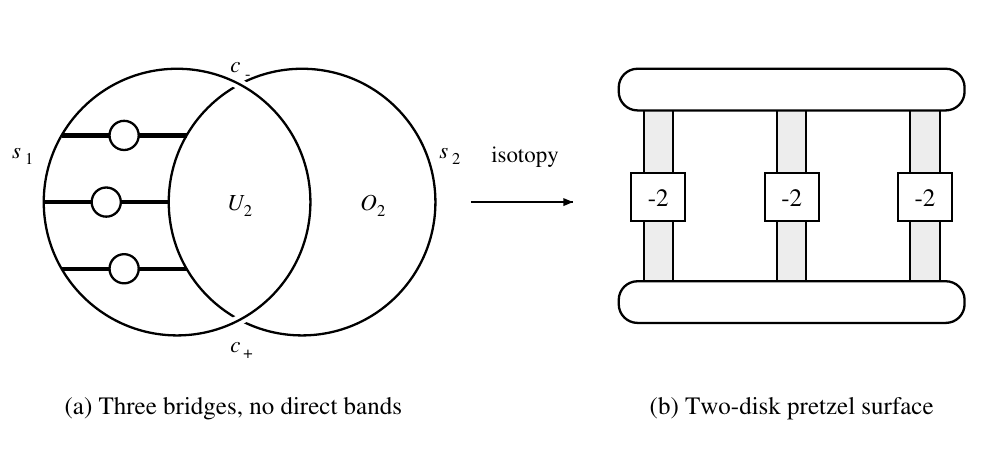}
\caption{Absorbing three bridges when $e=0$: the diagram in (a)
becomes the pretzel surface $F_3$ in (b), with boundary
$P(-2,-2,-2)$. Each $(-2)$-box denotes two signed half-twists
in the convention of \cite{FLL}.}
\label{fig:pretzel-absorption}
\end{figure}

\smallskip
\noindent\emph{Case 2: At least one direct band ($e>0$).}
Let $\alpha$ and $\beta$ be the two boundary arcs
of $O_1$, parametrized from $c_-$ to $c_+$, with $\alpha$ adjacent
to $U_1$ and $\beta$ adjacent to $U_2$. If $n>0$, number the
bridges so that their endpoints have the orders
\[
x_1<\cdots<x_n\quad\text{on }\alpha,\qquad
y_1<\cdots<y_n\quad\text{on }\beta.
\]
These orders agree because the bridge arcs are disjoint in the
disk $O_1$. A direct $U_1$-band has one endpoint on $\alpha$ and
its other endpoint on the arc bordering $O_2$; a direct $U_2$-band
similarly has one endpoint on $\beta$. Since $O_2$ is empty and
each $U_i$ contains at most one band, these other boundary arcs
have no intervening attachments.

Call a band or bridge \emph{accessible} if the boundary intervals
needed for its local move across $c_+$ have no intervening
attachment. If the endpoint of a direct $U_1$-band lies after
$x_n$, or the endpoint of a direct $U_2$-band lies after $y_n$,
that band is accessible: its interval on the other boundary arc
is also unobstructed. If neither condition holds, then on each
boundary arc carrying a direct-band endpoint, the last bridge
endpoint lies after it. An arc with no direct-band endpoint
introduces no obstruction. Thus the last bridge is accessible.
When $n=0$,
a direct band is accessible immediately. These alternatives
exhaust the possible attachment orders.
The two alternatives are illustrated in Figure~\ref{fig:bridge-order}.

\begin{figure}[htbp]
\centering
\includegraphics[width=\textwidth]{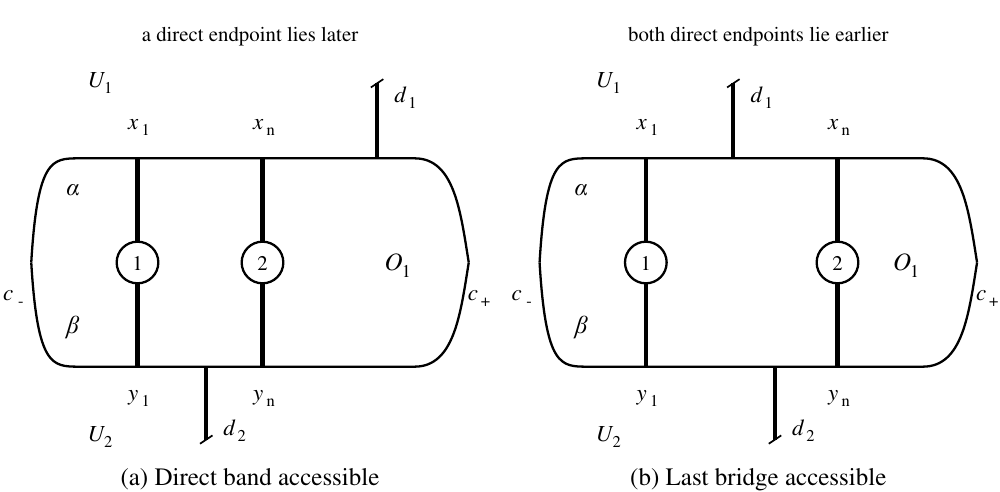}
\caption{Accessibility in bridge form. The boundary arcs
$\alpha,\beta$ of $O_1$ are ordered from $c_-$ to $c_+$.
Only the $O_1$-boundary endpoints of $d_1,d_2$ are shown; their
other endpoints border the empty region $O_2$.
In (a), $d_1$ is accessible; in (b), the last bridge is accessible.}
\label{fig:bridge-order}
\end{figure}

\smallskip
\noindent\emph{Case 2a: A bridge is accessible.}
Use Figure~\ref{fig:generalized-seifert-moves}(b) to exchange
the accessible bridge for a new direct band in $U_2$ immediately next to $c_+$.
There is already a direct band because $e>0$. If none lies in
$U_2$, swap the new band across $c_+$ using
Figure~\ref{fig:generalized-seifert-moves}(a) to put it in $U_1$. The accessibility
condition gives an empty disk for these local moves. The two
bands now in the same $U_i$-region bound an empty strip, since
that region has no ordinary circles or other direct bands.
Bridge attachments in $O_1$ lie on the opposite side of the
relevant distinguished-circle arc and are permitted by the
adjacency convention of \cite[Definition 2.3]{FLL}. Thus
\cite[Lemma 2.7]{FLL} removes a positive Hopf band. The exchange
replaces one bridge by one direct band, and the Hopf removal
then decreases $n+e$ by one. The resulting diagram again has
at most one direct band in each $U_i$.

\smallskip
\noindent\emph{Case 2b: A direct band is accessible.}
If $e=2$, swap the accessible band across $c_+$
into the region of the other direct band. The same empty-strip
argument permits Hopf removal and decreases $e$ by one.

If $e=1$, first put the accessible band in $U_2$ if necessary,
then reverse the exchange in
Figure~\ref{fig:generalized-seifert-moves}(b). This creates
one additional bridge in $O_1$ and leaves no direct bands, so
the reduction ends in Case~1. This last
exchange is a terminal isotopy; it need not decrease $n+e$.
In particular, if $n=0$ and $e=1$, the two disks and their
single band form a disk, giving $F_1$.
The local moves are shown in Figure~\ref{fig:reduction-local-moves}.

\begin{figure}[htbp]
\centering
\includegraphics[width=\textwidth]{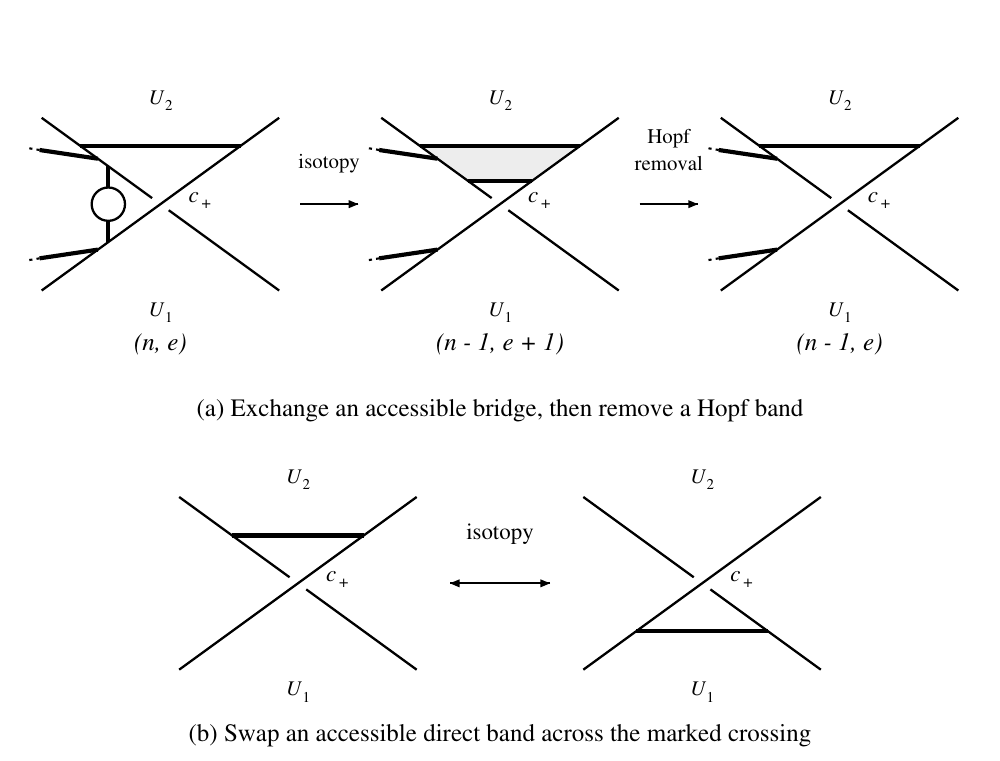}
\caption{Local reductions at $c_+$: (a) exchange an accessible
bridge for a direct band, then remove a Hopf band across the empty
shaded strip; (b) swap a direct band between $U_1$ and $U_2$.
Thin arcs bound distinguished disks; thick intervals are positive
bands. The surviving band's position is schematic; use the deletion
in \cite[Lemma 2.7]{FLL}. For $e=1$, reversing the first move in (a)
with no other direct band gives $e=0$.}
\label{fig:reduction-local-moves}
\end{figure}

\smallskip
\noindent\emph{Termination.}
Every nonterminal step decreases $n+e$, so the procedure
terminates. The isotopies preserve $\mc T$, and each Hopf
removal preserves it by Proposition~\ref{prop:top-murasugi}.
Connectedness is preserved as well: a Hopf removal deletes one
of two parallel bands, leaving the other. The procedure cannot
therefore end with two disks and no bands or bridges. Its
terminal surface is $F_m$ for some $m\ge1$.
\end{proof}

\begin{remark}\label{rem:fll-comparison}
The reduction above uses the tree condition and the empty region
$O_2$ to reach a pretzel surface $F_m$. The general reduction in
\cite{FLL} also uses step~$(6')$, which adds a $1$-handle and
realizes the original surface as an incompressible subsurface,
to reach the finite list in \cite[Figure~7]{FLL}. Here we have used
only the operations of Section~\ref{subsec:top-preserving}, which
preserve $\mc T$.
\end{remark}

We now compute the graded top term of the residual pretzel surface.

\begin{lemma}\label{lem:pretzel-top}
For the oriented surfaces in \eqref{eq:pretzel-family},
\[
i(F_m)=m-1,\qquad
\mc T(F_m)=\HFK(P_m,m-1)\cong\F^{\,m}[0].
\]
\end{lemma}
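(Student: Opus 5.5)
We first compute the surface index, then the top term from the fact that $P_m$ is alternating. The surface $F_m$ consists of two disks joined by $m$ twisted bands, so $\chi(F_m)=2-m$. Since it is planar, every boundary component of $F_m$ is accounted for by $b_1(F_m)=m-1$ through $|\partial F_m|=b_1+1=m$. The boundary $P_m=P(-2,\ldots,-2)$ with $m$ entries is indeed an $m$-component link; each $(-2)$-tangle closes off a component. Hence
\[
i(F_m)=\frac{m-(2-m)}{2}=m-1 .
\]
For $m=1$, $F_1$ is a disk, $P_1$ is the unknot, and $\mc T(F_1)=\F[0]$; this is consistent with the formula.

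Next we compute the top term. The diagram of $P_m$ shown in Figure~\ref{fig:pretzel-absorption}(b) is alternating and non-split, since all crossings have the same twist sign in a pretzel with all entries of one sign. By \cite{OSalternating}, extended to links in \cite{OSlink}, $\HFK$ of a non-split alternating link is determined by its Alexander polynomial and signature. Concretely, $\HFK_M(L,a)$ is supported on the diagonal $M=a+\sigma/2$ in our integral normalization, after accounting for the $|L|-1$ shift built into the collapsed link convention, and its rank equals $|$coefficient of $t^a$ in $(t^{1/2}-t^{-1/2})^{|L|-1}\Delta_L(t)|$.

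We then verify $F_m$ has maximal Euler characteristic. This holds because $F_m$ is the Seifert surface of an alternating diagram, which is minimal by Murasugi--Crowell and Gabai. Equivalently, it follows from the quasipositivity already recorded for the generalized surface. So the top Alexander grading of $\HFK(P_m)$ is $m-1$.

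It remains to find the rank and the Maslov grading. Up to orientation conventions, $P_m$ is the $(2,m)$-torus-type chain: the connected sum-like necklace of $m$ Hopf clasps closed into a ring. Its Conway polynomial, computed by the skein relation or directly from the Seifert matrix of $F_m$, has leading coefficient $m$. Indeed, the Seifert form of $F_m$ on the $m-1$ cycles through consecutive bands is a tridiagonal matrix whose determinant is $\pm m$. Hence the top coefficient of the normalized multivariable-collapsed polynomial is $m$ in absolute value, so $\dim \mc T(F_m)=m$.

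The main obstacle is fixing the absolute Maslov grading to be exactly $0$ rather than the other diagonal shift. For this I would use the signature: by thinness the top group sits in Maslov grading $(m-1)+\sigma(P_m)/2$ in our convention, so it suffices to show $\sigma(P_m)=-2(m-1)$ with the orientation induced by $F_m$. This follows because the symmetrized Seifert form $V+V^{T}$ of $F_m$ is the negative of the $A_{m-1}$-type Cartan matrix, which is definite of rank $m-1$. As a cross-check, the case $m=2$ gives the positive Hopf link with top term $\F[0]$. Alternatively, I would anchor the grading by induction on $m$: $F_{m}$ is a positive Hopf plumbing onto $F_{m-1}$ with an extra unlinked band, and comparing with Proposition~\ref{prop:graded-bound}'s normalization fixes the sign. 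Signs are the delicate point, so I would check the $m=2,3$ cases explicitly against the convention of \cite{FLL}.
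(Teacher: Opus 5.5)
There is a genuine gap in the Maslov-grading step. Your computation of $i(F_m)$ and of the rank $m$ (from $|\det V_m|=m$ and the Ozsv\'ath--Szab\'o theorem for alternating links) matches the paper's and is fine. But the step you yourself identify as the crux, fixing the Maslov grading, is wrong as written.

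The diagonal $M=A+\sigma/2$ is the formula for knots. For an $r$-component alternating link in the collapsed integer normalization used here, the support is
\[
M=A+\frac{\sigma(L)-r+1}{2}.
\]
For example, $H_+$ has $r=2$, $\sigma=-1$ and $\HFK(H_+,1)=\F[0]$, yet the knot formula would place this top group in Maslov grading $1/2$.

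You compensate by asserting $\sigma(P_m)=-2(m-1)$, which is impossible. The form $V_m+V_m^{T}=2V_m$ lives on a space of dimension $m-1$, so your own definiteness argument gives $\sigma(P_m)=-(m-1)$. With that correct value, your formula yields $M=(m-1)/2$, which is not even an integer for even $m$. The two errors cancel only by accident. The correct formula gives
\[
M=(m-1)+\frac{-(m-1)-m+1}{2}=0,
\]
which is exactly the paper's computation.

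Your fallback checks do not repair this, because they are also incorrect.
\begin{itemize}
\item $P_2=P(-2,-2)$ is not the Hopf link. $F_2$ is an annulus with Seifert matrix $(-2)$, and its boundary is a four-crossing $(2,4)$ torus link with the annular (antiparallel) orientation. Its top term is $\F^{2}[0]$, as the lemma asserts; your claimed $\F[0]$ contradicts the statement being proved.
\item $F_m$ is not a positive Hopf plumbing onto $F_{m-1}$. If it were, Proposition~\ref{prop:top-murasugi} would give $\mc T(F_m)\cong\mc T(F_{m-1})$, which has rank $m-1$ rather than $m$.
\item Proposition~\ref{prop:graded-bound} concerns fibered links, and $P_m$ is not fibered for $m\ge2$, since its top term has rank $m$.
\end{itemize}
So the grading has to come from the signature, with the $-(|L|-1)/2$ shift included.
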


\begin{proof}
The case $m=1$ is the unknot. For $m\ge2$, the diagram is connected
and alternating. Its standard surface is planar, has $m$ boundary
components, and has Euler characteristic $2-m$. In particular,
$P_m$ is an $m$-component link with the boundary orientation of
this surface, and $i(F_m)=m-1$.

In the basis of $H_1(F_m)$ given by cycles through consecutive
pretzel strips, a Seifert matrix is
\begin{equation}\label{eq:pretzel-matrix}
V_m=\begin{pmatrix}
-2&1&&0\\
1&-2&\ddots&\\
&\ddots&\ddots&1\\
0&&1&-2
\end{pmatrix}_{(m-1)\times(m-1)}.
\end{equation}
We use the signature convention in which the positive trefoil has
signature $(-2)$. The matrix is the negative type-$A_{m-1}$ Cartan
matrix, hence is negative definite and has determinant of absolute
value $m$. It follows that
\begin{equation}\label{eq:pretzel-alexander}
\sigma(P_m)=-(m-1),\qquad
\Delta_{P_m}(t)=\pm m(t^{1/2}-t^{-1/2})^{m-1},
\end{equation}
where $\Delta$ is the symmetrized one-variable Alexander polynomial.
For example, the absolute determinants satisfy
$d_k=2d_{k-1}-d_{k-2}$, with $d_0=1,d_1=2$, so $d_k=k+1$.
Symmetry of $V_m$ gives the displayed polynomial directly.

By the alternating-link calculation \cite[Theorem 4.1]{OSalternating},
the ranks of the collapsed groups are the absolute coefficients of
\[
(t^{1/2}-t^{-1/2})^{m-1}\Delta_{P_m}(t)
=\pm m(t^{1/2}-t^{-1/2})^{2m-2}.
\]
Its highest exponent is $m-1$, with coefficient of absolute value
$m$. In the integer Maslov normalization, the supporting grading
for an alternating link is
\[
M=A+\frac{\sigma(L)-|L|+1}{2};
\]
see also \cite[Theorem 1.3]{OSlink}. At $A=m-1$, this gives
\[
M=(m-1)+\frac{-(m-1)-m+1}{2}=0.
\]
This proves the stated top term and its Alexander maximality.
\end{proof}

In particular, the annular surface for $P(-2,-2)$ has oriented
boundary the mirror of $\mathrm{L4a1}\{0\}$ listed in LinkInfo
\cite{Knotinfo}. This oriented link has top term $\F^{\,2}[0]$.

\subsection{Concentration of the top term and a surviving cycle}
\label{subsec:surviving-class}

\begin{proof}[Proof of Lemma~\ref{lem:concentration}]
The surface $F_-$ of Section~\ref{subsec:generalized-surface} is
connected, quasipositive, and has index $a=g(L_+)-1$.
Lemmas~\ref{lem:bridge-form} and~\ref{lem:pretzel-reduction} reduce
it to $F_m$ for some $m\ge1$, preserving its absolutely
Maslov-graded top term. Lemma~\ref{lem:pretzel-top} therefore gives
\[
\HFK(L_-,a)=\mc T(F_-)
\cong\mc T(F_m)\cong\F^{\,m}[0].
\]
Equation~\eqref{eq:changed-surface-index} and genus detection give
the maximal Alexander grading $a$. Thus $b=m$, and the same crossing
has both prime smoothing and concentration of the top term.
\end{proof}

For the induction, we also record a surviving cycle for the link in
Lemma~\ref{lem:concentration}.
The diagram $D_-$ has exactly one negative crossing, so its link
is strongly quasipositive by \cite{FLL}. Consequently
$\tau_{\mathrm{top}}(L_-)=g(L_-)=a$ by
\cite[Proposition 5.19]{HR22}, with the link convention of
\cite{CH}. There is therefore a cycle
\[
\xi\in C(L_-),\qquad M(\xi)=0,
\]
of filtration $a$ representing the nonzero class in $H_0(C(L_-))$.
The cycle may have lower-filtration correction terms. Strong
quasipositivity provides this surviving class; the surface reductions
above establish concentration of the entire top term.

\begin{remark}\label{rem:split-exception}
Concentration in Maslov grading zero fails when $L_-=J_1\sqcup J_2$
with both $J_i$ fibered and positive, allowing unknots. The
split-union formula gives
\[
\HFK(L_-,g(L_-))\cong\F[0]\oplus\F[-1].
\]

\noindent This situation can occur if primeness is dropped. Keep $D$ connected,
reduced, and positive, with reduced Seifert tree $T$ and chosen leaf
$v$. If $q=2$ and no branch meets both arcs of $u$ between the marked
crossings, the generalized surface has two components. The marked
pair gives the only crossings between their boundary sublinks, with
the same sublink passing over at both, so $L_-=J_1\sqcup J_2$.
Each $J_i$ has a positive diagram whose reduced Seifert graph is a
tree, hence is fibered. For example, changing a crossing in one
summand of $H_+\#H_+$ gives $H_+\sqcup U$.
\end{remark}

\section{Proof of the theorems}\label{sec:theorem-proofs}

\begin{proof}[Proof of Theorem~\ref{thm:links}]
We prove a stronger statement: in addition to the stated formula,
we show that $H_{-1}(C(L))$ has a basis represented by $|L|-1$ cycles
whose leading terms are linearly independent in Alexander grading
$g(L)-1$. This additional assertion allows us to apply
Lemma~\ref{lem:filtered-observation} to the smoothing complex.
The proof consists of three steps.

\smallskip
\noindent\emph{Step 1: Set up the induction.}
We induct on the number of crossings in a positive diagram,
removing nugatory crossings as needed.

For the unknot, the next-to-top term is zero and the cycle assertion
is empty. For the positive Hopf link,
\[
\HFK(H_+,1)=\F[0],\qquad
\HFK(H_+,0)=\F^{\,2}[-1].
\]
Its Maslov-$(-1)$ total homology is one-dimensional and has a cycle
representative with nonzero leading term in Alexander grading zero.
Thus both assertions hold in the base cases.

If $L=L_1\#L_2$ is composite, visual primeness \cite{Ozawa} gives
positive diagrams of the nontrivial factors with fewer crossings.
The factors are fibered, and their top terms are generated by
Maslov-zero cycles representing nonzero total homology classes.
The connected-sum formula \cite{OSknot,OSlink} gives
\begin{equation}\label{eq:sum-next-top}
\begin{split}
\HFK(L,g(L)-1)\cong{}&
\HFK(L_1,g(L_1)-1)\otimes\HFK(L_2,g(L_2))\\
&\oplus\HFK(L_1,g(L_1))\otimes\HFK(L_2,g(L_2)-1).
\end{split}
\end{equation}
Each top factor is $\F[0]$. Writing $r_i=|L_i|$, we have
$|L|=r_1+r_2-1$ and $p(L)=p(L_1)+p(L_2)$. Induction therefore
gives support only in Maslov gradings $0$ and $(-1)$, and
\[
\dim_{\F}\HFK_{-1}(L,g(L)-1)
=(p(L_1)+r_1-1)+(p(L_2)+r_2-1)
=p(L)+|L|-1.
\]
Tensoring the inductive cycle representatives with the other
factor's top cycle gives $(r_1-1)+(r_2-1)=|L|-1$ independent
total homology classes. Their leading terms are independent in the
two summands of \eqref{eq:sum-next-top}. It remains to treat a prime
link $L_+$ other than the base cases.

Choose the skein triple using Lemmas~\ref{lem:prime-smoothing}
and~\ref{lem:concentration}, and put
\[
r=|L_+|,\qquad a=g(L_+)-1.
\]
The smoothing $L_0$ is nontrivial, prime, fibered, and positive,
with a positive diagram having fewer crossings. Removing one band
from the canonical fiber surface gives
\begin{equation}\label{eq:smoothing-genus}
g(L_0)=\begin{cases}
a+1,& |L_0|=r+1,\\
a,& |L_0|=r-1.
\end{cases}
\end{equation}
Let $Q$ be the smoothing complex defined in
Section~\ref{sec:knot-floer}. In both cases its highest Alexander
grading is $a+1$, with a one-dimensional group generated by a
Maslov-zero cycle. Moreover,
\begin{equation}\label{eq:smoothing-next-top}
Q_{-1}(a)\cong\F^{\,r+1},\qquad
Q_M(a)=0\quad\text{for }M\notin\{0,-1\}.
\end{equation}

When $|L_0|=r+1$, these claims follow directly from induction and
$p(L_0)=1$. The strengthened assertion supplies $r$ cycles with
independent leading terms in grading $a$, representing a basis of
$H_{-1}(Q)$.

When $|L_0|=r-1$, the bidegree decomposition is
\[
\begin{split}
Q_{-1}(a)={}&
\bigl(\HFK_{-1}(L_0,a-1)\otimes J_{(0,1)}\bigr)\\
&\oplus\bigl(\HFK_0(L_0,a)\otimes J_{(-1,0)}\bigr),
\end{split}
\]
where $J_{(M,A)}$ denotes the indicated bidegree summand of $J$.
The two summands have dimensions $r-1$ and $2$. Induction supplies
$r-2$ surviving cycles in the first summand, while the second
supplies two from the top cycle of $C(L_0)$. Their leading terms
are independent. They represent all of $H_{-1}(Q)$, whose dimension
is $r$ by \eqref{eq:total-homology} and the definition of $J$.
Thus Lemma~\ref{lem:filtered-observation} applies to $Q$ in either
component-count case.

\smallskip
\noindent\emph{Step 2: Determine the rank of the skein map.}
Concentration for $L_-$ gives the exact sequence
\begin{equation}\label{eq:rank-one-sequence}
\HFK_0(L_-,a)\xrightarrow{g_a}Q_{-1}(a)
\xrightarrow{h_a}\HFK_{-1}(L_+,a)\longrightarrow0.
\end{equation}
Consequently,
\begin{equation}\label{eq:rank-one-dimension}
r+1-\operatorname{rank}g_a
=\dim_{\F}\HFK_{-1}(L_+,a)\ge r,
\end{equation}
where the inequality follows from Proposition~\ref{prop:graded-bound}.
Thus $\operatorname{rank}g_a\le1$.

To prove nonvanishing, take the cycle $\xi$ of
Section~\ref{subsec:surviving-class}. Injectivity of $G=g_*$ gives
$G[\xi]\ne0$ in $H_{-1}(Q)$. If $g_a$ were zero, the filtered
cycle $g(\xi)$ would be supported entirely below Alexander grading
$a$, contrary to Lemma~\ref{lem:filtered-observation}. Thus
\[
\operatorname{rank}g_a=1,\qquad
\HFK_{-1}(L_+,a)\cong\F^{\,r}.
\]
For $M\notin\{0,-1\}$, the adjacent $L_-$ groups vanish by
Lemma~\ref{lem:concentration}, and $Q_M(a)=0$ by
\eqref{eq:smoothing-next-top}. Exactness gives
$\HFK_M(L_+,a)=0$. Denoting the Maslov-zero dimension by $z(L_+)$
proves the formula in Theorem~\ref{thm:links} for a prime link.

\smallskip
\noindent\emph{Step 3: Prove the cycle assertion.}
Choose cycles $z_1,\ldots,z_r$ in $Q$ representing a basis of
$H_{-1}(Q)$ with independent leading terms in Alexander grading
$a$. Let $E\subset Q_{-1}(a)$ be the span of these leading terms.
The cycle $g(\xi)$
differs from a suitable linear combination of the $z_i$ by a
boundary. As in the proof of Lemma~\ref{lem:filtered-observation},
any Maslov-$(-1)$ boundary has zero Alexander-$a$ component: such
a component could only come from the Maslov-zero term at $a+1$,
whose generator is a cycle. The nonzero leading term of $g(\xi)$
therefore lies in $E$. Since $g_a$ has rank one,
\[
\operatorname{im}g_a\subset E.
\]

\noindent On total homology, the kernel of
\[
H:H_{-1}(Q)\longrightarrow H_{-1}(C(L_+))
\]
is the one-dimensional image of $H_0(C(L_-))$ under $G$, generated
by $G[\xi]$. Choose $r-1$ linear combinations of the $z_i$ whose
classes complement this kernel. Their images under $h$ are cycles
representing a basis of $H_{-1}(C(L_+))$.

The chosen leading terms complement $\operatorname{im}g_a$ in
$E$. By \eqref{eq:rank-one-sequence}, the kernel of $h_a$ on
$Q_{-1}(a)$ is precisely $\operatorname{im}g_a$. Their images
under $h_a$ are consequently independent, so the image cycles have
the required leading terms in grading $a$. This completes the
strengthened induction and proves Theorem~\ref{thm:links}.
\end{proof}

\begin{proof}[Proof of Theorem~\ref{thm:knots}]
Apply Theorem~\ref{thm:links} with $|L|=1$.
\end{proof}

\begin{proof}[Proof of Corollary~\ref{cor:prime}]
By Theorem~\ref{thm:knots}, the Maslov-$(-1)$ summand of the
next-to-top term is $\F^{\,p(K)}[-1]$. A nontrivial knot is
prime if and only if its connected-sum decomposition has one
nontrivial factor.
\end{proof}

\begin{proof}[Proof of Corollary~\ref{cor:alexander}]
The graded Euler characteristic of knot Floer homology is the
symmetrized Alexander polynomial \cite{OSknot,Rthesis}. Taking
Euler characteristics in Alexander grading $g-1$ in
Theorem~\ref{thm:knots} gives
\[
a_{g-1}(K)=z(K)-p(K).
\]
Thus $z(K)=a_{g-1}(K)+p(K)$, which gives the stated decomposition.
For a prime knot, $p(K)=1$, and the total dimension is
$a_{g-1}(K)+2$.
\end{proof}

\begin{proof}[Proof of Corollary~\ref{cor:fixed-points}]
By Corollary~\ref{cor:alexander}, the next-to-top term of a prime
fibered positive knot has rank $a_{g-1}(K)+2$. Ni's theorem
\cite{Nifp2} therefore implies that its monodromy is freely isotopic
to a diffeomorphism with at most $a_{g-1}(K)+1$ fixed points.
When $a_{g-1}(K)=-1$, this diffeomorphism has no fixed points.
\end{proof}

\bibliographystyle{alpha}
\bibliography{references}

@article{BD23,
  author = {Binns, Fraser and Dey, Subhankar},
  title = {{Rank bounds in link Floer homology and detection results}},
  journal = {Quantum Topol.},
  volume = {16},
  number = {4},
  pages = {743--794},
  year = {2025},
  doi = {10.4171/QT/197},
  eprint = {2201.03048},
  archiveprefix = {arXiv},
  primaryclass = {math.GT},
  url = {https://doi.org/10.4171/QT/197},
}

@misc{CH,
  author = {Cheng, Zhechi and Hedden, Matthew},
  title = {{Knot Floer homology of positive braids}},
  year = {2026},
  eprint = {2504.13005v2},
  archiveprefix = {arXiv},
  primaryclass = {math.GT},
  howpublished = {arXiv:2504.13005v2},
  url = {https://arxiv.org/abs/2504.13005v2},
}

@article{CHS22,
  author = {Cheng, Zhechi and Hedden, Matthew and Sarkar, Sucharit},
  title = {{Murasugi sum and extremal knot Floer homology}},
  journal = {Quantum Topol.},
  volume = {16},
  number = {4},
  pages = {795--830},
  year = {2025},
  doi = {10.4171/QT/186},
  eprint = {2202.09041},
  archiveprefix = {arXiv},
  primaryclass = {math.GT},
  url = {https://doi.org/10.4171/QT/186},
}

@article{FLL,
  author = {Feller, Peter and Lewark, Lukas and Lobb, Andrew},
  title = {{Almost positive links are strongly quasipositive}},
  journal = {Math. Ann.},
  volume = {385},
  pages = {481--510},
  year = {2023},
  doi = {10.1007/s00208-021-02328-x},
  eprint = {1809.06692},
  archiveprefix = {arXiv},
  primaryclass = {math.GT},
  url = {https://doi.org/10.1007/s00208-021-02328-x},
}

@incollection{Gabai,
  author = {Gabai, David},
  title = {{The Murasugi sum is a natural geometric operation}},
  booktitle = {Low-dimensional topology (San Francisco, Calif., 1981)},
  series = {Contemp. Math.},
  volume = {20},
  pages = {131--143},
  publisher = {Amer. Math. Soc., Providence, R.I.},
  year = {1983},
  doi = {10.1090/conm/020/718138},
  url = {https://doi.org/10.1090/conm/020/718138},
}

@article{H10,
  author = {Hedden, Matthew},
  title = {{Notions of positivity and the Ozsv{\'{a}}th-Szab{\'{o}} concordance invariant}},
  journal = {J. Knot Theory Ramifications},
  volume = {19},
  pages = {617--629},
  year = {2010},
}

@article{HKM07,
  author = {Honda, Ko and Kazez, William H. and Mati\'{c}, Gordana},
  title = {{Right-veering diffeomorphisms of compact surfaces with boundary}},
  journal = {Invent. Math.},
  volume = {169},
  pages = {427--449},
  year = {2007},
  doi = {10.1007/s00222-007-0051-4},
  eprint = {math/0510639},
  archiveprefix = {arXiv},
  primaryclass = {math.GT},
  url = {https://doi.org/10.1007/s00222-007-0051-4},
}

@article{HR22,
  author = {Hedden, Matthew and Raoux, Katherine},
  title = {{Knot Floer homology and relative adjunction inequalities}},
  journal = {Sel. Math. New Ser.},
  volume = {29},
  number = {1},
  pages = {7},
  year = {2023},
  doi = {10.1007/s00029-022-00810-1},
  eprint = {2009.05462},
  archiveprefix = {arXiv},
  primaryclass = {math.GT},
  url = {https://doi.org/10.1007/s00029-022-00810-1},
}

@misc{Knotinfo,
  author = {Livingston, Charles and Moore, Allison H.},
  title = {{KnotInfo: Table of Knot Invariants and LinkInfo: Table of Link Invariants}},
  howpublished = {\url{https://knotinfo.org/}},
  url = {https://knotinfo.org/},
  year = {2026},
  note = {Data accessed 9 October 2026},
}

@article{Livingston,
  author = {Livingston, Charles},
  title = {{Computations of the Ozsv\'{a}th-Szab\'{o} knot concordance invariant}},
  journal = {Geom. Topol.},
  volume = {8},
  pages = {735--742},
  year = {2004},
}

@article{Ni23,
  author = {Ni, Yi},
  title = {{The next-to-top term in knot Floer homology}},
  journal = {Quantum Topol.},
  volume = {13},
  number = {3},
  pages = {579--591},
  year = {2022},
  doi = {10.4171/QT/174},
  url = {https://doi.org/10.4171/QT/174},
}

@article{Nifp2,
  author = {Ni, Yi},
  title = {{Knot Floer homology and fixed points}},
  journal = {J. Topol.},
  volume = {19},
  number = {2},
  pages = {e70077},
  year = {2026},
  doi = {10.1112/topo.70077},
  url = {https://doi.org/10.1112/topo.70077},
}

@article{OSalternating,
  author = {Ozsv{\'{a}}th, Peter and Szab{\'{o}}, Zolt{\'{a}}n},
  title = {{Heegaard Floer homology and alternating knots}},
  journal = {Geom. Topol.},
  volume = {7},
  pages = {225--254},
  year = {2003},
  doi = {10.2140/gt.2003.7.225},
  url = {https://doi.org/10.2140/gt.2003.7.225},
}

@article{OScontact,
  author = {Ozsv{\'{a}}th, Peter and Szab{\'{o}}, Zolt{\'{a}}n},
  title = {{Heegaard Floer homology and contact structures}},
  journal = {Duke Math. J.},
  volume = {129},
  pages = {39--61},
  year = {2005},
}

@misc{OSes,
  author = {Ozsv{\'a}th, Peter and Szab{\'o}, Zolt{\'a}n},
  title = {{On the skein exact squence for knot Floer homology}},
  year = {2007},
  eprint = {0707.1165},
  archiveprefix = {arXiv},
  primaryclass = {math.GT},
  howpublished = {arXiv:0707.1165},
  url = {https://arxiv.org/abs/0707.1165},
}

@article{OSknot,
  author = {Ozsv{\'a}th, Peter and Szab{\'o}, Zolt{\'a}n},
  title = {{Holomorphic disks and knot invariants}},
  journal = {Adv. Math.},
  volume = {186},
  number = {1},
  pages = {58--116},
  year = {2004},
  doi = {10.1016/j.aim.2003.05.001},
  url = {https://doi.org/10.1016/j.aim.2003.05.001},
  issn = {0001-8708},
  mrnumber = {2065507},
  mrclass = {57M27 (57R58)},
}

@article{OSlink,
  author = {Ozsv{\'{a}}th, Peter and Szab{\'{o}}, Zolt{\'{a}}n},
  title = {{Holomorphic disks, link invariants and the multi-variable Alexander polynomial}},
  journal = {Algebr. Geom. Topol.},
  volume = {8},
  pages = {615--692},
  year = {2008},
}

@article{Ozawa,
  author = {Ozawa, Makoto},
  title = {{Closed incompressible surfaces in the complements of positive knots}},
  journal = {Comment. Math. Helv.},
  volume = {77},
  number = {2},
  pages = {235--243},
  year = {2002},
}

@phdthesis{Rthesis,
  author = {Rasmussen, Jacob},
  title = {{Floer homology and knot complements}},
  school = {Harvard University},
  year = {2003},
}

@article{Stoimenow05,
  author = {Stoimenow, Alexander},
  title = {{On polynomials and surfaces of variously positive links}},
  journal = {J. Eur. Math. Soc.},
  volume = {7},
  number = {4},
  pages = {477--509},
  year = {2005},
  doi = {10.4171/JEMS/36},
  url = {https://doi.org/10.4171/JEMS/36},
}

\end{document}